\newtheorem{theorem}{Theorem}
\newtheorem{lemma}[theorem]{Lemma}
\newtheorem{corollary}[theorem]{Corollary}
\newtheorem*{remark}{Remark}
\title{Classically Integral Quadratic Forms Excepting At Most Two Values}
\author[M.Barowsky]{Madeleine Barowsky}
\author[W.Damron]{William Damron}
\author[A.Mejia]{Andres Mejia}
\author[F.Saia]{Frederick Saia}
\author[N.Schock]{Nolan Schock}
\author[K.Thompson]{Katherine Thompson}
\begin{document}


\begin{abstract}
Let $S \subseteq \mathbb{N}$ be finite. Is there a positive definite quadratic form that fails to represent only those elements in $S$? For $S = \emptyset$, this was solved (for classically integral forms) by the $15$-Theorem of Conway-Schneeberger in the early 1990s and (for all integral forms) by the $290$-Theorem of Bhargava-Hanke in the mid-2000s. In 1938 Halmos attempted to list all weighted sums of four squares that failed to represent $S=\{m\}$; of his $88$ candidates, he could provide complete justifications for all but one. In the same spirit, we ask, ``for which $S = \{m, n\}$ does there exist a quadratic form excepting only the elements of $S$?'' Extending the techniques of Bhargava and Hanke, we answer this question for quaternary forms. In the process, we prove what Halmos could not; namely, that $x^2+2y^2+7z^2+13w^2$ represents all positive integers except $5$. We develop new strategies to handle forms of higher dimensions, yielding an enumeration of and proofs for the $73$ possible pairs that a classically integral positive definite quadratic form may except.

\end{abstract}
\maketitle



\section{Introduction and Statement of Results}
Determining which integers are represented by a positive definite quadratic form has been of long-standing mathematical interest. One particular phrasing of the question is as follows: given a finite subset $S \subseteq \mathbb N$, do there exist quadratic forms that represent $n \in \mathbb N$ if and only if $n \not\in S$? \\
\\
When $S = \emptyset$, the answer is known; this is the case of classifying \textit{universal} positive definite quadratic forms. In $1916$, Ramanujan \cite{Ramanujan} proved the existence of $55$ universal quaternary diagonal positive definite quadratic forms. While it was later discovered that one of those forms fails to represent $n=15$, the remaining $54$ do give the complete list. Ramanujan's main technique was that of escalation, a crucial tool in later universality results. For classically integral forms, there is the Conway-Schneeberger $15$ Theorem. First announced in 1993, it states that a classically integral positive definite quadratic form is universal if and only if it represents all positive integers up to $15$; moreover, provided was a list of the $204$ quaternary forms with this property. For the more general case, there is the $2005$ result of Bhargava-Hanke: the $290$ Theorem. This states that an integral positive definite quadratic form is universal if and only if it represents all positive integers up to $290$; again, provided was a list of $6436$ such forms in four variables \cite{BH}. \\
\\
A natural next question is to classify those positive definite quadratic forms representing all positive integers with a single exception (i.e.\@\xspace the case when $ S  =\{m\}$). Using Ramanujan's idea of escalation, in \cite{Halmos} Halmos provided a list of $135$ diagonal quaternary candidate forms that could fail to represent at most one integer; using known results about representation by ternary subforms, Halmos was able to narrow that list down to $88$ candidate forms. He provided proofs that $87$ of those did indeed fail to represent exactly one integer, though like Ramanujan there is some overcounting (two of Halmos' $88$ except exactly \textit{two} integers). Halmos suspected the remaining form, $x^2+2y^2+7z^2+13w^2$, only failed to represent $5$, but he could not provide a proof. Pall \cite{Pall} did supply a proof in $1940$, using genus theory and basic modular arithmetic. Continuing the study of forms representing all but finitely many integers, in 2009 Bochnak and Oh \cite{BO} provided necessary and sufficient modular conditions for when a classically integral positive definite quadratic form excepts finitely many values.\\
\\
In this paper, we offer a different set of conditions, analogous to those of Bhargava-Hanke, for the case where a quadratic form excepts precisely one value. To do this, we study via escalation the sets $S = \{m,n\}$ (with $m < n$) for which there exists a classically integral positive definite quadratic form representing precisely $\mathbb{N} - S$. Furthermore, we develop methodology that can be adapted to prove similar theorems for other finite sets of exceptions. Minimal results in this area appear in past literature. Most recent is a proof of Hanke in \cite{Hanke} solving an outstanding conjecture of Kneser which claimed that the form $x^2+3y^2+5z^2+7w^2$ represents precisely those $m \in \mathbb N-\{2,22\}$. \\
\\
We now state our main results.
\begin{theorem}\label{Th1}
There are exactly $73$ sets $S=\{m,n\}$ (with $m<n$) for which there exists a classically integral positive definite quadratic form representing precisely $\mathbb{N} - S$. 
\end{theorem}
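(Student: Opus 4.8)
The plan is to follow the escalation strategy that Ramanujan, Halmos, and Bhargava--Hanke all employed, adapted here to track two exceptional values simultaneously. I would begin by formalizing the notion of an \emph{escalator lattice} for a target exception set $S = \{m,n\}$: a positive definite classically integral lattice is $S$-\emph{relevant} if it represents no element of $S$ among the integers it would be ``forced'' to represent by universality considerations, i.e.\ it represents every positive integer less than $m$ (if $m>1$), potentially skips $m$ and $n$, and has not yet been shown to represent something outside $\mathbb{N}-S$. Starting from the zero lattice, one repeatedly adjoins a vector of minimal admissible norm (the truant), where the admissible norms are exactly the smallest positive integers not yet represented that are allowed to be skipped only if they lie in $S$. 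Because the dimension-one escalator has a bounded diagonal entry, and each subsequent escalation bounds the next diagonal entry via a Hermite-type inequality, this process yields a finite, explicitly enumerable tree of escalator lattices up to dimension four (and one shows the tree stabilizes: every five-dimensional escalator already represents all of $\mathbb{N}-S$, or fails for a reason visible in a quaternary sublattice).

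Next, for each leaf of this escalation tree I would need to decide whether the corresponding quaternary form actually represents precisely $\mathbb{N}-S$ and nothing more, nothing less. This is where the Bhargava--Hanke machinery enters: for each candidate quaternary form $Q$, write the theta series as $E + g$ where $E$ is the Eisenstein part (whose coefficients $a_E(n)$ are given by a finite product of local densities and are bounded below by an effective constant times $n^{1/2-\epsilon}$ away from finitely many ``bad'' square classes) and $g$ is a cusp form. One bounds the cusp form coefficients using the Deligne bound $|a_g(n)| \ll n^{1/2+\epsilon}$ together with an explicit computation of the constant via the basis of the relevant space of cusp forms. Comparing these bounds shows $a_Q(n) > 0$ for all sufficiently large $n$ outside a controlled set; the remaining finitely many $n$, together with the anisotropic/bad square classes, are checked by direct computation or by analysis of the relevant ternary sublattices and their spinor genus theory (this is exactly the flavor of Pall's argument for $x^2+2y^2+7z^2+13w^2$ excepting only $5$, which we must reproduce and which the paper highlights). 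A form survives as a witness for $S$ if and only if this analysis confirms it misses exactly $m$ and $n$.

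Finally, I would assemble the bookkeeping: collect every pair $S=\{m,n\}$ for which at least one escalator leaf is confirmed to represent precisely $\mathbb{N}-S$, and conversely argue that no pair outside this list can work --- this direction follows because any form witnessing $S$ contains an $S$-escalator as a sublattice (being built from truant vectors), so its exception set, which contains $\{m,n\}$ and by the confirmed analysis equals $\{m,n\}$, forces $S$ onto our list. Deduplication matters here, as Ramanujan and Halmos both overcounted: two distinct escalator leaves may yield forms with the same exception pair, and some forms that a priori look like they might except two values in fact except one or three, so each candidate needs its exception set pinned down exactly. Tallying the distinct confirmed pairs should give the number $73$.

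The main obstacle I expect is not the escalation enumeration (which is mechanical, if tedious) but the \emph{lower-bound step for the finitely many remaining square classes and small $n$} in each quaternary case --- precisely the step where a naive Eisenstein-plus-Deligne estimate fails because the form is anisotropic at some prime or because a square class is genuinely not represented. There one must descend to ternary sublattices and invoke the theory of spinor exceptional square classes, and verifying for each of the dozens of candidate forms that the skipped classes are exactly $\{m,n\}$ and contain no additional surprises (the way Ramanujan's $55$th form and two of Halmos' $88$ misbehaved) is the delicate, case-heavy heart of the proof. A secondary difficulty is making the cusp-form constant effective enough that the ``sufficiently large'' threshold is small enough to finish by computation; this requires an explicit handle on the Sturm bound and on the dimensions and Hecke structure of the relevant cusp form spaces for each level that arises.
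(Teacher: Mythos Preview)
Your overall strategy matches the paper's: escalation to enumerate candidate lattices, then the Bhargava--Hanke analytic machinery (Eisenstein lower bound via Siegel's local densities, Deligne cusp-form upper bound, explicit constants $C_E$ and $C_f$) to certify each candidate's exact exception set, followed by bookkeeping and the sublattice argument for the converse direction.

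There is, however, a genuine gap. You assert the tree ``stabilizes'' so that quinary escalators either already represent all of $\mathbb{N}-S$ or fail for reasons visible in a quaternary sublattice, and your analysis step then treats only ``each candidate quaternary form $Q$.'' In fact $8$ of the $73$ pairs are witnessed \emph{only} by quinary forms --- no quaternary form excepts precisely those pairs --- so the quinary level must be analyzed on its own terms, not dismissed. The paper does this by sorting the quaternary escalators that except more than two values into three types (finite exception set; infinitely many exceptions driven by an anisotropic prime but no local obstruction; genuine local obstructions) and devising a separate technique for each, including a generalization of the Bhargava--Hanke ``$10$--$14$ switch'' that, for the locally obstructed cases, searches each quinary escalation for a quaternary subform free of local obstructions. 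Your proposal does not anticipate this layer, and without it you would undercount.

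A secondary methodological difference: for the residual hard cases you reach for ternary sublattices and spinor-exceptional square classes, in the spirit of Pall. The paper instead remains computational throughout --- split local covers, approximate boolean theta functions, direct theta-series evaluation --- and explicitly presents its treatment of Halmos' form as \emph{different} from Pall's genus-theoretic argument. Your spinor-genus route may be workable, but you would have to confirm it scales uniformly across the several hundred candidate forms; the paper's computational route is less elegant but uniform and what actually closes the argument.
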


Both the $15$ and $290$ Theorems supply more specific lists of \textbf{critical integers} whose representability by a form $Q$ is necessary and sufficient for universality. The criticality of such an integer $m$ is proven by explicitly constructing an almost universal quadratic form excepting only $\{m\}$.

\begin{remark}
With the knowledge that $m \leq 15$, our escalation process independently verifies each of the 9 critical integers $\{1, 2, 3, 5, 6, 7, 10, 14, 15\}$ given by the $15$ Theorem. Furthermore, for all excepted pairs $\{m,n\}$, note that $m$ must be a critical integer.
\end{remark}

The following corollary gives criteria for proving that a given form excepts precisely one critical integer.

\begin{corollary}Given any $m \in \{1,2,3,5,6,7,10,14,15\}$, let $n_{max}$ denote the largest $n$ such that $\{m,n\}$ is a possible excepted pair. If a form $Q$ excepts $m$ but represents all other $k \leq n_{max}$, then $m$ is the only exception for $Q$.
\end{corollary}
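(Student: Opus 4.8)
The plan is to argue by contradiction, using the complete classification of excepted pairs in Theorem~\ref{Th1} together with the escalation machinery behind its proof. Suppose $Q$ excepts $m$ and represents every $k \le n_{max}$ with $k \neq m$, but suppose $m$ is \emph{not} the only exception of $Q$. Let $n$ be the smallest positive integer other than $m$ that $Q$ fails to represent. Since $Q$ represents all $k \le n_{max}$ except $m$, we must have $n > n_{max}$, and the two smallest elements of the exception set of $Q$ are exactly $m$ and $n$.

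I would then argue that no such $n$ can exist, because the escalation used to prove Theorem~\ref{Th1} bounds the second exception of \emph{any} classically integral positive definite form whose smallest exception is the critical integer $m$: such a form, if it has a second exception at all, has it among the finitely many values occurring as the second coordinate of one of the $73$ pairs, and hence at most $n_{max}$. Concretely, starting from the truant $1$ and repeatedly adjoining a diagonal term to represent the current truant while skipping $m$, one generates a finite escalation tree; $Q$ contains one of its leaf forms as an escalator sublattice, and on that branch the next truant after $m$ is already determined and is at most $n_{max}$. Applying this to $Q$ gives $n \le n_{max}$, contradicting $n > n_{max}$; hence the exception set of $Q$ is $\{m\}$.

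The step I expect to be the main obstacle is precisely the verification that this second-exception bound is $n_{max}$ and not something larger — in particular, ruling out forms whose exception set contains $m$, a value beyond $n_{max}$, and possibly further values, none of which is witnessed by an ``exactly two exceptions'' form. Handling this requires pushing the escalation one step further: from a candidate form excepting $\{m, n, \dots\}$ one adjoins additional diagonal terms $c_i w_i^2$ whose coefficients are chosen to represent the surplus exceptions while provably leaving $m$ and $n$ unrepresented, which pins each $c_i$ down to avoiding a finite list of residues and ``gap'' values; the resulting form then excepts exactly $\{m,n\}$ and so appears among the $73$, giving $n \le n_{max}$. Checking that the adjoined terms never accidentally represent $m$ or $n$ is the delicate local-to-global bookkeeping — exactly the difficulty Halmos met with $x^2+2y^2+7z^2+13w^2$ — and is where the analytic input (the Eisenstein lower bound versus the Deligne bound on the cuspidal part, as in Bhargava--Hanke) enters to dispose of the finitely many remaining forms.
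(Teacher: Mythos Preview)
The paper states this corollary without an explicit proof; it is meant to follow immediately from the escalation process underlying Theorem~\ref{Th1}. Your overall strategy---build an escalator sublattice of $Q$ with respect to $S=\{m\}$, and reach a leaf that excepts only $m$---is exactly the intended mechanism, and your first two paragraphs set this up correctly.

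Where your write-up drifts is in the justification you attempt in the third paragraph. In paragraph~2 you assert that the second exception of any such $Q$ must be one of the second coordinates appearing among the $73$ pairs. That is stronger than what the escalation gives: escalation shows the second exception is a \emph{truant} somewhere in the $S=\{m\}$ tree, and not every truant need correspond to a pair that is actually realized by a form with exactly two exceptions. Your paragraph~3 then tries to close this by taking a form excepting $\{m,n,\dots\}$ and adjoining orthogonal terms $c_i w_i^2$ with $c_i$ large enough to miss $m$ and $n$, hoping to land on a form excepting exactly $\{m,n\}$. This is where a genuine gap appears: you have no termination argument. If $Q$ has infinitely many exceptions (Type~B or Type~C in the paper's language), or if its exceptions happen to hit every residue you try, the adjoin-one-term-at-a-time process need not end, and the ``delicate bookkeeping'' you flag is not merely delicate---it can fail outright. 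The Halmos analogy is misleading here: that difficulty is about verifying a \emph{specific} form, not about guaranteeing the existence of a two-exception form for an arbitrary $n$.

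The cleaner route, and the one the paper is implicitly relying on, is that the required bound is already a \emph{computational output} of the proof of Theorem~\ref{Th1}: the escalation in Sections~\ref{HigherEsc}--5 is carried through (in dimensions up to six) until every branch terminates at a form excepting only $m$, and along the way the maximal truant encountered for each $m$ is recorded---this maximum is $n_{\max}$. Granting that, the corollary is immediate: since every truant in the $S=\{m\}$ tree is at most $n_{\max}$ and $Q$ represents all $k\le n_{\max}$ with $k\neq m$, the escalator sublattice inside $Q$ can always be extended until it reaches a leaf excepting only $m$, whence $Q$ excepts only $m$. You do not need to manufacture an auxiliary two-exception form at all.
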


We are also able to provide the proof that Halmos could not, using a method different from that of Pall: 
\begin{corollary}
The only positive integer $m$ that is not represented by $x^2+2y^2+7z^2+13w^2$ is $m=5$.
\end{corollary}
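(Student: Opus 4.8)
The plan is to deduce this from the preceding corollary together with the classification in Theorem~\ref{Th1}, so that the only real content is a finite representability check. First I would confirm that $Q:=x^2+2y^2+7z^2+13w^2$ indeed fails to represent $5$: since $13w^2\ge 13>5$ forces $w=0$ and then $7z^2\ge 7>5$ forces $z=0$, a representation would require $x^2+2y^2=5$, which is impossible ($y=0$ gives $x^2=5$, $|y|=1$ gives $x^2=3$, and $|y|\ge 2$ gives $2y^2\ge 8$). Since $5$ lies in the list $\{1,2,3,5,6,7,10,14,15\}$ of critical integers, the preceding corollary applies with $m=5$: letting $n_{\max}$ denote the largest $n$ for which $\{5,n\}$ occurs among the $73$ admissible pairs of Theorem~\ref{Th1}, it suffices to prove that $Q$ represents every positive integer $k\le n_{\max}$ with $k\ne 5$.

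That final step is the finite verification, which I would carry out by exhibiting integers $x,y,z,w$ realizing $k=x^2+2y^2+7z^2+13w^2$ for each $k\in\{1,\dots,n_{\max}\}\setminus\{5\}$. Granting it, the preceding corollary yields that $5$ is the unique positive integer missed by $Q$: any other exception $n$ would have $n>5$ (as $1,2,3,4$ are visibly represented), so $\{5,n\}$ would be an admissible pair, forcing $n\le n_{\max}$ and contradicting the check. This reproves Halmos's assertion with no appeal to Pall's genus-theoretic argument, relying instead on the escalation-based enumeration behind Theorem~\ref{Th1}.

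A self-contained alternative, closer to the analytic engine of Theorem~\ref{Th1}, is to run the Bhargava--Hanke decomposition for $Q$ directly. Write $\theta_Q=E_Q+C_Q$ with $E_Q$ the genus (Eisenstein) theta series of weight $2$ and level $N=364=4\cdot 7\cdot 13$ and $C_Q$ a cusp form, so $r_Q(n)=r_{E_Q}(n)+a_{C_Q}(n)$. One checks that $Q$ is locally universal (a short computation at $2,7,13$) and that $Q$ is isotropic over every $\mathbb{Q}_p$ — its determinant $182$ being a nonsquare at each of $2,7,13$ — so that there are no bad square classes and $r_{E_Q}(n)\gg_\varepsilon n^{1-\varepsilon}$ for all $n\ge 1$. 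One then bounds $|a_{C_Q}(n)|\le C\,d(n)\,n^{1/2}$ by decomposing the relevant weight-$2$ cusp space into newforms and applying Deligne's bound, and compares the two estimates to obtain an explicit $N_0$ with $r_Q(n)>0$ for $n>N_0$; a finite check then settles $n\le N_0$. Here $C_Q\ne 0$ (the genus of $Q$ has class number at least $2$, since $5$ is locally but not globally represented), so controlling the cuspidal term is genuinely necessary.

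The first approach has essentially no obstacle beyond bookkeeping — extracting $n_{\max}$ for $m=5$ from the proof of Theorem~\ref{Th1} and performing the representation check. In the analytic alternative, the real difficulty is making the constant $C$ in the cusp-form bound numerically small enough (via control of the newform decomposition at level $364$ and, if needed, the square-class refinements of the Bhargava--Hanke estimates) that $N_0$ is small enough for the finite verification to be feasible; this is precisely the kind of effective input assembled for Theorem~\ref{Th1}.
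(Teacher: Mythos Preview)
Your first approach is correct but genuinely different from what the paper does. The paper does \emph{not} deduce Corollary~\ref{HalmosThm} from Theorem~\ref{Th1} and the preceding corollary; instead it runs the full analytic machinery of Section~\ref{ComputationalMethods} directly on Halmos' form as a worked example: it computes the level $N_Q=728$, the character $\chi_Q=\left(\tfrac{182}{\cdot}\right)$, explicit local densities at $2,7,13$ and generic primes, the constants $C_E=\tfrac{36}{71}$ and $C_f\approx 13.4964$ (from a $108$-dimensional cusp space), then enumerates $343203$ squarefree eligible numbers and checks them via the split local cover $x^2\oplus(2y^2+7z^2+13w^2)$. Your route via the preceding corollary is far lighter \emph{given} Theorem~\ref{Th1}: you only need the table entry $n_{\max}=125$ for $m=5$ and a finite check up to $125$. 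The trade-off is that Theorem~\ref{Th1} itself is proved by running exactly this analytic engine on every escalator, so the saving is organizational, not mathematical; the paper's choice to redo it for $Q$ explicitly is expository.

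Two small corrections. In your second (analytic) sketch the level is $N_Q=728=2^3\cdot 7\cdot 13$, not $364$; the inverse Gram matrix has a $2$-adic denominator forcing the extra factor of $2$. And your one-line re-justification of the preceding corollary (``so $\{5,n\}$ would be an admissible pair'') is not quite a proof: if $Q$ had three or more exceptions $\{5,n_1,n_2,\dots\}$, the pair $\{5,n_1\}$ need not itself be the full exception set of any form. The correct argument is the escalation one underlying the corollary (any $Q$ excepting $5$ contains an escalator sublattice whose truant is at most $n_{\max}$), which you may simply cite rather than reprove.
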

We further note the following errors in Halmos' original results:

\begin{corollary}
The forms $x^2+y^2+2z^2+22w^2$ and $x^2+2y^2+4z^2+22w^2$ represent precisely $\mathbb{N} - \{14,78\}$, in contradiction to Halmos' conjecture of $\mathbb{N} - \{14\}$.
\end{corollary}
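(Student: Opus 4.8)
\noindent\emph{Proof proposal.}

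The plan is to treat the two forms by one uniform scheme. Write $Q_1 = x^2 + y^2 + 2z^2 + 22w^2$ and $Q_2 = x^2 + 2y^2 + 4z^2 + 22w^2$; for each $Q \in \{Q_1, Q_2\}$ we must show (a) that neither $14$ nor $78$ is represented by $Q$, and (b) that every other positive integer is. Part (a) is a finite verification: since $22w^2 \le 78$ forces $w \in \{0,1\}$, representing $78$ would require the ternary subform $x^2 + y^2 + 2z^2$ (resp.\ $x^2 + 2y^2 + 4z^2$) to represent $78$ or $56$, and representing $14$ would require it to represent $14$; each of $14$, $56$, $78$ is ruled out by reduction modulo $16$ together with the description in part (b) of the integers \emph{not} represented by those ternary subforms.

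For part (b) the engine is that same ternary subform, obtained by fixing the last variable of $Q$. The diagonal ternaries $\langle 1,1,2\rangle$ and $\langle 1,2,4\rangle$ are each regular, with set of excepted positive integers
\[ \mathcal{E} \;=\; \{\, 4^{a}(16k+14) \;:\; a,k \ge 0 \,\}; \]
this is classical, and is also obtainable within the present framework from the local densities together with the genus and spinor-genus considerations used elsewhere in the paper. Consequently $Q$ represents $n$ as soon as there exists an integer $w$ with $0 \le 22w^2 \le n$ and $n - 22w^2 \notin \mathcal{E}$, so it suffices to prove that the only positive integers $n$ for which $n - 22w^2 \in \mathcal{E}$ for \emph{every} such $w$ are $n = 14$ and $n = 78$.

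This last point reduces to an elementary $2$-adic analysis. Since $22 = 2 \cdot 11$, the valuation $v_2(22w^2) = 1 + 2v_2(w)$ is always odd, so a comparison of $2$-adic valuations shows that $n - 22w^2$ is divisible by a high power of $4$ only under stringent conditions on $v_2(n)$ and $v_2(w)$, after which membership in $\mathcal{E}$ reduces to the requirement that a residue modulo $16$ equals $14$. Working modulo $16$ and trying $w \in \{0,1,2\}$, one finds that every $n \notin \mathcal{E}$ is trivially represented (take $w=0$), that every $n \in \mathcal{E}$ with $n \ge 88$ is represented (the cases $n \equiv 14 \pmod{16}$ and $n \in \mathcal{E} \setminus (14 + 16\mathbb{Z})$ are each handled by a single admissible $w$), and that among the finitely many remaining $n$ exactly $14$ and $78$ fail --- in each case because every $n - 22w^2$ with $22w^2 \le n$ is either negative or again of the form $4^a(16k+14)$. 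Assembling (a) and (b), and noting that $Q_1$ and $Q_2$ produce literally the same computation because they share the excepted set $\mathcal{E}$, gives the corollary.

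I expect the $2$-adic bookkeeping in part (b) to be the genuine obstacle: one has to be certain that no \emph{infinite} sparse family of $n$ --- for instance a subset of $\{n \equiv 14 \pmod{16}\}$ cut out by further divisibility constraints --- manages to keep $n - 22w^2$ inside $\mathcal{E}$ for all admissible $w$, even though the number of admissible $w$ grows like $\sqrt{n/22}$ while only boundedly many residue classes are ever bad. A secondary point requiring care is the assertion that $\langle 1,2,4\rangle$ excepts precisely $\mathcal{E}$; this is where one either cites the classification of regular ternary forms or re-derives it from the genus theory already at hand. The heavier Bhargava-Hanke input --- bounding the cusp part of $\theta_Q$ against its Eisenstein part to dispatch all large $n$ uniformly --- would also settle part (b), but the ternary subform makes that unnecessary here.
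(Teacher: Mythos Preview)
Your approach is correct and genuinely different from the paper's. The paper does not prove this corollary directly; it falls out of the general machinery of Section~\ref{ComputationalMethods}: compute $C_E$ and $C_f$ for each form, enumerate eligible numbers, and check them via a split local cover and (approximate) boolean theta series. In particular the paper treats $Q_1$ and $Q_2$ the same way it treats every other quaternary candidate --- by bounding the cusp part against the Eisenstein part --- and never invokes the regularity of the ternary subforms.

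What you do instead is exploit that $\langle 1,1,2\rangle$ and $\langle 1,2,4\rangle$ are classical regular ternaries with the \emph{same} excepted set $\mathcal{E}=\{4^a(16k+14)\}$, so the whole question collapses to elementary $2$-adic bookkeeping on $n-22w^2$. This is cleaner and avoids modular forms entirely; the price is that it only works because these particular ternaries are regular with a known excepted set, whereas the paper's method is uniform across all $204{+}$ forms. One small imprecision in your sketch: for $n\equiv 14\pmod{16}$ a \emph{single} choice of $w$ does not suffice --- when $n\equiv 78\pmod{64}$ one has $n-22=56+64k\in\mathcal{E}$, so $w=1$ fails and you must fall back on $w=2$ (giving $n-88\equiv 6\pmod{16}\notin\mathcal{E}$). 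With that extra sub-case inserted, the case analysis closes exactly as you claim, and the worry you flag about an infinite sparse bad family does not materialise: once $n\ge 88$, one of $w\in\{1,2\}$ always lands outside $\mathcal{E}$.
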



\textbf{Acknowledgements:} This research was supported by the National Science Foundation (DMS-1461189). We thank Jeremy Rouse for his helpful discussions and suggestions and extend gratitude to both Jonathan Hanke and Jeremy Rouse for providing encouragement and code; specifically, we give thanks to Jonathan Hanke for his Sage and C code implementing many useful quadratic form operations and to Jeremy Rouse for similar code in Magma. Lastly, we are grateful to Wake Forest University for providing access to servers.



\section{Background}
Let $n \in \mathbb N$. An \textbf{$\boldsymbol{n}$-ary integral quadratic form} is a homogeneous integral polynomial of degree two of the form $$Q({\vec{x}})=Q(x_1,\ldots,x_n) = \displaystyle\sum_{1 \leq i \leq j \leq n}a_{ij}x_ix_j \in \mathbb Z\left[x_1,\ldots,x_n\right].$$ 
An equivalent way to represent quadratic forms is by symmetric matrices in $\mathcal M_n \left(\frac{1}{2}\mathbb{Z}\right)$; that is, for each $n$-ary integral quadratic form there exists a unique symmetric matrix $A_Q \in \mathcal M_n \left(\frac{1}{2}\mathbb{Z}\right)$ such that 
$$Q({\vec{x}}) = \vec{x}^{\,t} A_Q \vec{x}.$$

When $A_Q \in \mathcal M_n\left(\mathbb{Z}\right)$ (equivalently, when all cross-terms of $Q$ are even) we say that $Q$ is \textbf{classically integral}. We say that an integral $n$-ary quadratic form is \textbf{positive definite} if $Q\left(\vec{x}\right) \geq 0$, with equality if and only if  $\vec{x} = \vec{0}$. Note that $\det(A_Q) >0$ for any positive definite quadratic form $Q$. 
For the remainder of this paper, assume that any ``form'' is a ``classically integral positive definite quadratic form.''\\
\\
Given two $n$-ary forms $Q_1$ and $Q_2$ with respective matrices $A_{Q_1}$ and $A_{Q_2}$, we say that $Q_1$ and $Q_2$ are \textbf{equivalent (over $\boldsymbol{\mathbb{Z}}$)} if and only if there exists a matrix $M \in GL_n(\mathbb Z)$ such that $A_{Q_1} = MA_{Q_2}M^t$.\\
\\
Let $m \in \mathbb N$ and let $Q$ be an $n$-ary form. We say that $m$ is \textbf{represented} by $Q$ if there exists $\vec{x} \in \mathbb Z^n$ so that  
$$Q\left(\vec{x}\right) = m.$$ 
Let 
$$r_Q(m) := \# \left\{ \vec{x} \in \mathbb Z^n \mid Q(\vec{x})=m\right\}$$
denote the \textbf{representation number of $\boldsymbol{m}$ by $\boldsymbol{Q}$.} As mentioned earlier, a form $Q$ is said to be \textbf{universal} if $r_Q(m)>0$ for all $m \in \mathbb N$. In \cite{Halmos}, Halmos used the term {almost universal} to denote a form $Q$ which failed to represent a single $m \in \mathbb N$. In the same spirit, we take \textbf{almost universal} to denote a form representing all $m \in \mathbb N-S$, where $S \subseteq \mathbb{N}$ is finite.


\subsection{Escalation}
\label{escalation}
Let $Q$ be an $n$-ary form and $A_{Q}$ be its matrix representation. If $L$ is an $n$-dimensional lattice endowed with inner product $\langle \cdot{,}\cdot \rangle$ such that $Q(\vec{x})=\langle \vec{x},\vec{x} \rangle$, we say that $L$ is the \textbf{associated lattice} for $Q$.

We define the $\textbf{truant}$ of $Q$ to be the least $n \in \mathbb{N}-S$ such that $n$ is not represented by $Q$. It should be noted that this generalizes the definition appearing in earlier literature, most notably in \cite{BH}, where $S = \emptyset$.\\
\\
An \textbf{escalation} of $L$ is any lattice generated by $L$ and a vector whose norm is the truant. An \textbf{escalator lattice} is any lattice that can be obtained by successive escalation of the trivial lattice.\\
\\
Given a form $Q$ of $n-1$ variables with associated matrix $A_Q$ and truant $t$, any escalation by $t$ will result in an $n$-ary form $\widehat{Q}$ with associated matrix $$A_{\widehat{Q}} =\left[ \begin{array}{ccc|c} & & & a_{1n}/2 \\ & A_Q & & \vdots \\ & & & a_{(n-1)n}/2 \\
\hline a_{1n}/2 & \cdots & a_{(n-1)n}/2 & t \end{array}\right].$$

The restrictions that $\det(A_{\widehat{Q}})>0$ and that $a_{in} \in 2\mathbb Z$ for $1 \leq i \leq n-1$ imply that there will be finitely many possible escalations of a quadratic form $Q$ by its truant $t$. Note that the resulting escalation lattice does not depend on the order in which basis vectors are appended.


\subsection{Modular Forms}
For details and additional background we refer the reader to \cite{DS}, \cite{Hanke}, and \cite{Thompson}. Any additional specific references will be provided in context. 
\\
\\
Throughout, let $Q$ be a \textit{quaternary} form with associated matrix $A_Q$. We define the \textbf{level} $N=N_Q$ of $Q$ to be the smallest integer such that $N(2(A_{Q}))^{-1}$ is an integral matrix with even diagonal entries.

We define the \textbf{determinant} $D=D_Q$ of $Q$ to be the determinant of $A_Q$. For all primes $p \nmid 2N$, we define the following quadratic character $\chi_Q$: $$\chi_Q(p) = \left( \frac{D}{p} \right).$$ 
Recall that the local normalized form (also called the Jordan decomposition) of $Q$ is $$Q(\vec{x}) \equiv_{\mathbb Z_p} \displaystyle\sum_j p^{v_j}Q_j(x_j),$$ with $\dim(Q_j) \leq 2$ (and in fact, for $p \neq 2$, $\dim(Q_j)=1$). We then define
\begin{center}
$\mathbb S_0 = \{ j \ \vert \ v_j=0 \}$, \hspace{1.0in} $\mathbb S_1 = \{ j \ \vert \ v_j =1\}$, \hspace{1.0in} $\mathbb S_2 = \{ j \ \vert \ v_j \geq 2 \}$,
\end{center}
and we let $s_i = \sum_{j \in \mathbb{S}_i} \dim(Q_j)$.\\
\\
We then define the \textbf{theta series} associated to $Q$ as $$\Theta_Q(z) = 1 + \displaystyle\sum_{m \geq 1} r_Q(m)e^{2 \pi i mz}.$$ 

\begin{theorem}
$\Theta_Q(z)$ is a modular form of weight $2$ over $\Gamma_0(N_Q)$ with associated character $\chi_Q$.
\end{theorem}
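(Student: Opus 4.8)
The plan is to show that $\Theta_Q$ satisfies the classical transformation law under $\Gamma_0(N_Q)$ with the asserted nebentypus, and that it is holomorphic at all cusps, which together with the growth of its Fourier coefficients (a quaternary form has $r_Q(m) = O(m^{1+\varepsilon})$, well below the $O(m)$ threshold) pins down the weight as $2$. Concretely, I would first recast $\Theta_Q$ as the theta series attached to the lattice $(L, \langle \cdot, \cdot \rangle)$ associated to $Q$ of rank $n = 4$, so that
\begin{equation}
\Theta_Q(z) = \sum_{\vec{x} \in \mathbb{Z}^4} e^{2\pi i \, Q(\vec{x}) z} = \sum_{\vec{x} \in \mathbb{Z}^4} e^{\pi i \, \vec{x}^{\,t}(2A_Q)\vec{x}\, z},
\end{equation}
so that $2A_Q$ is an even integral symmetric positive definite $4 \times 4$ matrix. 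This is exactly the setting of the classical theory of theta series of even positive definite quadratic forms in an even number of variables (here $n=4$).

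Next I would invoke the standard theta transformation formula. The key inputs are: (i) the Poisson summation identity giving the behavior of $\Theta_Q$ under $z \mapsto -1/(Nz)$, which relates $\Theta_Q$ to the theta series of the dual lattice $N(2A_Q)^{-1}$ — this is precisely why the level $N_Q$ is defined to be the least $N$ making $N(2A_Q)^{-1}$ even integral; (ii) the obvious invariance $\Theta_Q(z+1) = \Theta_Q(z)$; and (iii) the computation that the automorphy factor and the root-of-unity constants arising from the Gauss sums assemble, for $\gamma = \left(\begin{smallmatrix} a & b \\ c & d \end{smallmatrix}\right) \in \Gamma_0(N_Q)$, into
\begin{equation}
\Theta_Q\!\left(\frac{az+b}{cz+d}\right) = \chi_Q(d)\,(cz+d)^{2}\,\Theta_Q(z),
\end{equation}
where the weight $n/2 = 2$ comes from the four variables and the character $\chi_Q(d) = \left(\frac{D_Q}{d}\right)$ (extended multiplicatively/with the appropriate Kronecker symbol conventions to all $d$ coprime to $N_Q$) comes from the discriminant of the form via the quadratic reciprocity bookkeeping in the Gauss sums. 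I would cite \cite{DS}, \cite{Hanke}, or \cite{Thompson} for the precise form of this classical result rather than rederiving it. Holomorphy on the upper half-plane is immediate from absolute and locally uniform convergence of the series (positive definiteness forces exponential decay of the terms), and holomorphy at the cusps follows because each expansion of $\Theta_Q$ at a cusp is again a theta-type series with non-negative exponents, hence bounded as $\mathrm{Im}(z) \to \infty$; equivalently, $\Theta_Q$ is a holomorphic (not merely meromorphic) modular form because all its Fourier coefficients $r_Q(m)$ are non-negative integers and the form is positive definite.

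The main obstacle is bookkeeping the constants in the theta transformation formula: one must verify that the eighth roots of unity produced by the Gauss sums under the $z \mapsto -1/(Nz)$ inversion, together with the conductor of the quadratic character, combine into exactly the nebentypus $\chi_Q$ with no extra sign, and that the matrix-level condition defining $N_Q$ (integrality with even diagonal of $N(2A_Q)^{-1}$) is precisely what is needed for $\Gamma_0(N_Q)$-invariance rather than invariance under some finer or coarser group. Since $n = 4$ is even, there is no metaplectic/half-integral-weight complication and no extra theta-multiplier, so the character is an honest Dirichlet character mod $N_Q$; this is the simplification that makes the weight-$2$ statement clean. Once the transformation law is established for the generators of $\Gamma_0(N_Q)$ (namely $T = \left(\begin{smallmatrix}1&1\\0&1\end{smallmatrix}\right)$ and the matrices controlled by the Fricke-type involution, or more directly via the Hecke trick / the standard reduction to these two moves), the theorem follows, and it is this constant-tracking that I expect to consume essentially all of the real work.
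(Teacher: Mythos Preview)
Your outline is essentially correct and is the standard route to this classical fact: recast $\Theta_Q$ as the theta function of the even lattice $(\mathbb{Z}^4, 2A_Q)$, apply Poisson summation to obtain the behavior under $z\mapsto -1/(N_Qz)$ (this is exactly why $N_Q$ is defined so that $N_Q(2A_Q)^{-1}$ is even integral), combine with $z\mapsto z+1$, and track the Gauss sums to see that the nebentypus is the Kronecker character attached to $D_Q$. Since $n=4$ is even, no half-integral multiplier appears and the weight is $n/2=2$. Holomorphy at the cusps follows from positive definiteness.

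The paper, by contrast, does not carry out any of this: its entire proof is a citation to Andrianov--Zhuravlev \cite[Theorem~2.2, p.~61]{AZ}. So your proposal is not so much a different approach as it is the actual argument behind the reference the paper invokes. What you gain is self-containment and a clear explanation of why the level and character are defined as they are; what the paper gains is brevity, since this result is entirely classical and not the point of the paper.

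One small wrinkle to clean up: your remark that $r_Q(m)=O(m^{1+\varepsilon})$ is ``well below the $O(m)$ threshold'' is garbled (indeed $m^{1+\varepsilon}>m$), and in any case holomorphy at the cusps is not decided by such a coefficient bound but by the boundedness of $\Theta_Q$ in each cusp expansion, which you already argue correctly from positive definiteness. Also, $T$ together with the Fricke involution $W_{N_Q}$ do not literally generate $\Gamma_0(N_Q)$; the usual maneuver is to prove the transformation law for all of $\Gamma_0(N_Q)$ directly via the general theta transformation formula (as in Schoeneberg/Andrianov--Zhuravlev), which is what the cited reference does.
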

\begin{proof}
See \cite[Theorem 2.2, pg. 61]{AZ}.
\end{proof}
Each space of modular forms of fixed weight, level, and character decomposes into the space of cusp forms and the space of Eisenstein series.  Therefore, we write $\Theta_Q(z)=E_Q(z) + C_Q(z)$ where $E_Q(z)$ is Eisenstein and $C_Q(z)$ is a cusp form. We then consider $$r_Q(m) = a_E(m) + a_C(m),$$ where $r_Q(m)$, $a_E(m)$, and $a_C(m)$ respectively  denote the $m^{th}$ Fourier coefficient for $\Theta_Q(z)$, $E_Q(z)$, and $C_Q(z)$.

 The Eisenstein coefficients are  well understood; they are nonnegative and can be computed explicitly. The main tool is due to Siegel \cite{Siegel}. Note that throughout, $p$ denotes a place:
\begin{theorem}
(Siegel) $$a_E(m) = \displaystyle\prod_{p \leq \infty} \beta_p(m)$$ where $$\beta_p(m) = \displaystyle\lim_{U \to \{m\}} \dfrac{\operatorname{Vol}(Q^{-1}(U))}{\operatorname{Vol}(U)},$$ 

where $U$ is an open neighborhood of $m$ in $\mathbb Q_p$.

Specifically for $p \neq \infty$, we have that $$\beta_p(m) = \displaystyle\lim_{v \to \infty} \dfrac{\# \{ \vec{x} \in (\mathbb Z / p^v \mathbb Z)^4 \mid Q(\vec{x}) \equiv m \pmod{p^v} \}}{p^{3v}}.$$
\end{theorem}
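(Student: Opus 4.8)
The plan is to prove Siegel's theorem by establishing the local-global principle for representation densities, following the classical analytic approach via theta series and Eisenstein series. First I would recall that the number of representations $r_Q(m)$, averaged appropriately over the genus of $Q$, is captured by the Eisenstein part $E_Q(z)$ of the theta series; more precisely, I would invoke Siegel's main theorem on quadratic forms, which states that the weighted average of representation numbers over the classes in the genus of $Q$ equals a product of local densities. Since $\Theta_Q(z) = E_Q(z) + C_Q(z)$ with $E_Q$ Eisenstein, and since the Eisenstein subspace is one-dimensional for each relevant decomposition datum (or more carefully, since $E_Q$ depends only on the genus), the Fourier coefficient $a_E(m)$ is exactly this genus average, hence equals $\prod_{p \leq \infty} \beta_p(m)$.

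The key steps, in order, would be: (1) define the local representation density $\beta_p(m)$ for each finite prime $p$ as the stabilizing limit $\#\{\vec{x} \in (\mathbb{Z}/p^v\mathbb{Z})^4 : Q(\vec{x}) \equiv m \pmod {p^v}\}/p^{3v}$, and verify that this limit exists and stabilizes for $v$ sufficiently large (this uses Hensel-type lifting away from the ramified primes, and a more delicate $p$-adic analysis at $p \mid 2N$); (2) define the archimedean density $\beta_\infty(m)$ as the ratio of volumes $\operatorname{Vol}(Q^{-1}(U))/\operatorname{Vol}(U)$ in the limit as $U$ shrinks to $\{m\}$, which for a positive definite quaternary form is a computable constant times $m$; (3) assemble the Siegel--Minkowski formula identifying the genus-weighted average $\sum_i r_{Q_i}(m)/w_i \big/ \sum_i 1/w_i$ with $\prod_{p \leq \infty} \beta_p(m)$; and (4) identify this genus average with $a_E(m)$ by noting that the theta series of all forms in a genus have the same Eisenstein component, so the Eisenstein coefficient is the mass-weighted average of the $r_{Q_i}(m)$.

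The main obstacle I expect is step (3): the proof of the Siegel mass formula itself is substantial, requiring either the adelic framework (Tamagawa numbers and the computation that the Tamagawa number of $\mathrm{SO}(Q)$ is $2$, due to Weil) or Siegel's original analytic argument via Eisenstein series and careful manipulation of theta integrals. Since this is genuinely a deep theorem rather than a routine verification, the honest approach in the paper is to cite it: I would refer to Siegel's original papers \cite{Siegel} and to a standard modern treatment, and then spend the bulk of the argument on the parts we actually need downstream — namely, the explicit local computations of $\beta_p(m)$ at $p \nmid 2N$ (where $\beta_p(m)$ is given by a finite expression in terms of $p$, $v_p(m)$, and the character $\chi_Q$) and at the bad primes $p \mid 2N$ (where one must work directly with the Jordan decomposition $Q \equiv_{\mathbb{Z}_p} \sum_j p^{v_j} Q_j$ and the invariants $s_0, s_1, s_2$). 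A secondary subtlety is ensuring convergence of the infinite product $\prod_{p \leq \infty} \beta_p(m)$; for quaternary forms the local factors at unramified primes are $1 + O(p^{-2})$ after accounting for the character, so the product converges absolutely, but this estimate should be stated carefully as it underlies all later effective bounds on $a_E(m)$.
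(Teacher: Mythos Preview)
The paper does not prove this theorem at all: it simply states the result with the attribution ``(Siegel)'' and no proof environment, relying on the reference \cite{Siegel}. Your proposal is therefore far more detailed than the paper's treatment. You correctly anticipate this in your step (3), where you note that the Siegel mass formula is a deep result best handled by citation; that is exactly, and entirely, what the paper does. The surrounding material you sketch --- the identification of $a_E(m)$ with the genus-weighted average of representation numbers, the stabilization of the $p$-adic limits, the convergence of the Euler product --- is all correct and relevant background, but none of it appears in the paper as part of a proof of this statement. The explicit local density computations you mention do appear later in the paper (Lemmas \ref{LfuncEx}--\ref{LastDensEx} in the worked example), but as applications of Siegel's theorem rather than as ingredients in its proof.
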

To compute $\beta_\infty(m)$, the following result applies:
\begin{theorem}\label{Infinite}
(Siegel)
$$\beta_{\infty}(m) = \dfrac{\pi^2m}{\sqrt{\det(A_Q)}}.$$
\end{theorem}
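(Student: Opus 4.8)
The plan is to evaluate the archimedean local density $\beta_\infty(m)$ directly from its definition as a limiting volume ratio, reducing the computation to the volume of a Euclidean ball. First I would take the shrinking neighborhoods to be intervals $U = (m-\varepsilon, m+\varepsilon)$, so that $\operatorname{Vol}(U) = 2\varepsilon$ and $Q^{-1}(U) = \{\vec x \in \mathbb R^4 : m-\varepsilon < Q(\vec x) < m+\varepsilon\}$. Since $A_Q$ is symmetric and positive definite it factors as $A_Q = B^{t}B$ for an invertible real matrix $B$ with $|\det B| = \sqrt{\det A_Q}$ (via the Cholesky factorization, or the principal square root of $A_Q$). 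The substitution $\vec y = B\vec x$ sends $Q(\vec x) = \vec x^{\,t}A_Q\vec x$ to $\|\vec y\|^2$ and rescales Lebesgue measure, giving
$$\operatorname{Vol}(Q^{-1}(U)) = \frac{1}{\sqrt{\det A_Q}}\,\operatorname{Vol}\left(\{\vec y \in \mathbb R^4 : m-\varepsilon < \|\vec y\|^2 < m+\varepsilon \}\right).$$

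Next I would compute the remaining volume as that of a spherical shell in $\mathbb R^4$. The ball of radius $R$ in $\mathbb R^4$ has volume $\tfrac{\pi^2}{2}R^4$, so the shell between radii $\sqrt{m-\varepsilon}$ and $\sqrt{m+\varepsilon}$ (which makes sense for small $\varepsilon$ because $m>0$) has volume $\tfrac{\pi^2}{2}\big((m+\varepsilon)^2-(m-\varepsilon)^2\big) = 2\pi^2 m\varepsilon$. Dividing by $\operatorname{Vol}(U)=2\varepsilon$ yields the ratio $\pi^2 m/\sqrt{\det A_Q}$, which is independent of $\varepsilon$; passing to the limit gives the claimed formula along this family of neighborhoods. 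To conclude for arbitrary open neighborhoods of $m$ shrinking to $\{m\}$, I would observe that any such $U$ is trapped between two intervals $(m-\varepsilon',m+\varepsilon') \subseteq U \subseteq (m-\varepsilon, m+\varepsilon)$ with $\varepsilon'/\varepsilon \to 1$; monotonicity of Lebesgue measure together with the constancy just established forces the ratio for $U$ to converge to the same value, so the limit in the definition is well-defined and equals $\pi^2 m/\sqrt{\det A_Q}$.

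I do not expect a substantive obstacle: the argument is elementary once one invokes the positive-definite factorization $A_Q = B^{t}B$ and the closed form for the volume of a $4$-ball. The only point that calls for a little care is the justification that the limit in Siegel's definition does not depend on the shape of the shrinking neighborhoods, which is handled by the sandwiching argument above; and, more superficially, keeping track of the constant $\Gamma(3)=2$ in the ball-volume formula in dimension $4$. One could alternatively derive the same answer from the coarea formula, writing $\operatorname{Vol}(Q^{-1}(U))$ as an integral of the Leray surface density over the level sets of $Q$, but the direct volume computation is shorter and self-contained.
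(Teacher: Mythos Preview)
Your computation is correct and self-contained, whereas the paper does not actually prove this statement: its entire proof is the sentence ``This is simply a special case of Hilfssatz~72 of \cite{Siegel}.'' So your approach is genuinely different in that you carry out the archimedean volume calculation explicitly rather than deferring to Siegel. The change of variables via $A_Q=B^tB$ and the $4$-ball volume $\tfrac{\pi^2}{2}R^4$ are exactly the right ingredients, and the resulting ratio $\pi^2 m/\sqrt{\det A_Q}$ is correct.

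One small wrinkle: your sandwiching claim that an arbitrary open neighborhood $U\ni m$ can be trapped between symmetric intervals with $\varepsilon'/\varepsilon\to 1$ is not true in general (take $U=(m-\tfrac{1}{n},\,m+\tfrac{1}{n^2})$). The clean fix is to drop the symmetry: for any interval $(a,b)\ni m$ the shell volume is $\tfrac{\pi^2}{2}(b^2-a^2)=\tfrac{\pi^2}{2}(a+b)(b-a)$, so the ratio is $\tfrac{\pi^2(a+b)}{2\sqrt{\det A_Q}}\to \tfrac{\pi^2 m}{\sqrt{\det A_Q}}$ as $a,b\to m$. That handles all intervals, and since any open $U$ shrinking to $\{m\}$ is eventually contained in arbitrarily short intervals about $m$ (and contains one), monotonicity finishes the job. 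Alternatively, in this context the limit is standardly taken along intervals anyway, so the point is more cosmetic than substantive.
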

\begin{proof}
This is simply a special case of Hilfssatz $72$ of \cite{Siegel}.
\end{proof}
Now we recall some terminology from Hanke \cite{Hanke} relevant specifically to later local density computations.\\
\\
Let $R_{{p}^v}(m) := \left\{ \vec{x} \in (\mathbb Z/ {{p}^v} \mathbb Z)^4 \mid Q(\vec{x}) \equiv m \pmod{{p}^v} \right\}$ and set $r_{p^v}(m) := \# R_{{p}^v}(m)$. We say $\vec{x}\in R_{{p}^v}\left(m\right)$ 
\begin{itemize}
\item is of \textbf{Zero type} if $\vec{x} \equiv \vec{0} \pmod{ p}$, in which case we say $\vec{x} \in R_{{p}^v}^{\operatorname{Zero}}\left(m\right)$ with $r_{p^v}^{\operatorname{Zero}}\left(m\right) := \#R_{{p}^v}^{\operatorname{Zero}}\left(m\right)$; \\
\item is of \textbf{Good type} if ${p}^{v_j} x_j \not\equiv 0 \pmod{ p}$ for some $j \in\{1,2,3,4\}$, in which case we say $\vec{x} \in R_{{p}^v}^{\operatorname{Good}}\left(m\right)$ with $r_{p^v}^{\operatorname{Good}}\left(m\right) := \#R_{{p}^v}^{\operatorname{Good}}\left(m\right)$; \\
\item and is of \textbf{Bad type} otherwise, in which case we say $\vec{x} \in R_{{p}^v}^{\operatorname{Bad}}\left(m\right)$ with $r_{p^v}^{\operatorname{Bad}}\left(m\right)$.
\end{itemize}

If $r_{p^v}(m)>0$ for all primes $p$ and for all $v \in \mathbb{N}$, we say that $m$ is \textbf{locally represented}. If $Q(\vec{x})=0$ has only the trivial solution over $\mathbb{Z}_p$, we say that $p$ is an \textbf{anisotropic prime} for $Q$.

In the following theorems, we discuss reduction maps that allow for explicit calculation of local densities. Let the \textbf{multiplicity of a map $f : X \to Y$ at a given $y \in Y$} be $\#\{x \in X \mid f(x) = y\}$. If all $y \in Y$ have the same multiplicity $M$, we say that the map has multiplicity $M$.

\begin{theorem}
We have $$r_{p^{k+ \ell}}^{\operatorname{Good}}(m) = p^{3 \ell} r_{p^k}^{\operatorname{Good}}(m)$$ for $k \geq 1$ for $p$ odd and for $k \geq 3$ for $p=2$. 
\end{theorem}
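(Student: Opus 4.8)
\section*{Proof proposal}

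The plan is to realize both sides via the reduction map and to count its fibers. By induction on $\ell$ (the case $\ell=0$ being vacuous) it suffices to establish the one‑step identity $r_{p^{k+1}}^{\operatorname{Good}}(m)=p^{3}\,r_{p^{k}}^{\operatorname{Good}}(m)$ under the stated hypothesis on $k$: applying it with $k$ replaced by $k+\ell$ (still in the allowed range) and composing the resulting equalities yields the general statement. For the one‑step identity, consider $\rho\colon R_{p^{k+1}}^{\operatorname{Good}}(m)\to R_{p^{k}}^{\operatorname{Good}}(m)$ given by reduction modulo $p^{k}$. It is well defined: $Q(\vec{x})\equiv m\pmod{p^{k+1}}$ forces $Q(\vec{x})\equiv m\pmod{p^{k}}$, and Good type is a congruence condition modulo $p$ alone, hence is preserved under both reduction and lifting. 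It remains to compute $\#\rho^{-1}(\vec{y})$ for fixed $\vec{y}\in R_{p^{k}}^{\operatorname{Good}}(m)$. Writing $\vec{y}_{0}$ for any lift, every element of $\rho^{-1}(\vec{y})$ has the shape $\vec{y}_{0}+p^{k}\vec{t}$ with $\vec{t}\in(\mathbb{Z}/p\mathbb{Z})^{4}$ (and is automatically of Good type, since $p^k\vec t\equiv\vec 0\pmod p$), and the defining condition is governed by the Taylor identity
\[
Q(\vec{y}_{0}+p^{k}\vec{t})=Q(\vec{y}_{0})+p^{k}\,\nabla Q(\vec{y}_{0})\cdot\vec{t}+p^{2k}Q(\vec{t}),\qquad \nabla Q(\vec{y}_{0})=2A_{Q}\vec{y}_{0}.
\]

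For $p$ odd and $k\ge1$ we have $2k\ge k+1$, so modulo $p^{k+1}$ the condition $Q(\vec{y}_{0}+p^{k}\vec{t})\equiv m$ collapses to the single inhomogeneous linear equation $\nabla Q(\vec{y}_{0})\cdot\vec{t}\equiv -c\pmod p$, where $Q(\vec{y}_{0})=m+p^{k}c$. The crux is that Good type forces $\nabla Q(\vec{y}_{0})\not\equiv\vec 0\pmod p$: working in a basis putting $Q$ in its Jordan form at $p$, so that $A_{Q}$ is block diagonal, the Good hypothesis $p^{v_{j}}x_{j}\not\equiv0\pmod p$ puts the offending coordinate in the unimodular block $v_j=0$, on which $A_{Q}$ is invertible mod $p$ and on which $2$ is a unit, so $\nabla Q(\vec y_0)=2A_Q\vec y_0$ is already nonzero mod $p$ there. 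Hence $\vec{t}\mapsto\nabla Q(\vec{y}_{0})\cdot\vec{t}$ cuts out an affine hyperplane in $(\mathbb{Z}/p\mathbb{Z})^{4}$, every fiber of $\rho$ has exactly $p^{3}$ elements, and summing over $\vec{y}$ gives the claim. (The exponent $3=n-1$ reflects that near a Good point the affine quadric $Q=m$ is smooth, imposing one linear condition on the $n$ lifting parameters.)

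The case $p=2$ is the real obstacle and is exactly why the hypothesis tightens to $k\ge3$: since $\nabla Q=2A_{Q}\vec{x}\equiv\vec 0\pmod 2$ identically, the linear equation above is vacuous and the Taylor identity gives $Q(\vec{y}_{0}+2^{k}\vec{t})\equiv Q(\vec{y}_{0})\pmod{2^{k+1}}$ for all $\vec{t}$, so each fiber $\rho^{-1}(\vec{y})$ has size $0$ or $2^{4}=16$ according as $Q(\vec{y}_{0})\equiv m$ or $\equiv m+2^{k}$ modulo $2^{k+1}$ (a dichotomy independent of the chosen lift, again by the Taylor identity). To recover the factor $2^{3}$ one must show that exactly half of $R_{2^{k}}^{\operatorname{Good}}(m)$ falls in the first case. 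The plan is to construct a fixed‑point‑free involution on $R_{2^{k}}^{\operatorname{Good}}(m)$ of the form $\vec{y}\mapsto\vec{y}+2^{k-1}\vec{e}_{j}$, with $j=j(\vec{y})$ a canonical index — say the least index of the unimodular block at which $A_{Q}\vec{y}_{0}$ is odd, which exists because $A_{Q}$ is invertible mod $2$ on that block and $\vec y$ is Good — and to check via
\[
Q(\vec{y}_{0}+2^{k-1}\vec{e}_{j})\equiv Q(\vec{y}_{0})+2^{k}(A_{Q}\vec{y}_{0})_{j}\pmod{2^{k+1}}
\]
(the error term $2^{2k-2}Q(\vec{e}_{j})$ vanishing precisely because $2k-2\ge k+1$, i.e.\ $k\ge3$) that this involution interchanges the two cases. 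Then the "first case" fibers number $\tfrac12\,r_{2^{k}}^{\operatorname{Good}}(m)$, each contributing $16$, so $r_{2^{k+1}}^{\operatorname{Good}}(m)=16\cdot\tfrac12\,r_{2^{k}}^{\operatorname{Good}}(m)=2^{3}\,r_{2^{k}}^{\operatorname{Good}}(m)$. The delicate points to verify are that $j(\vec{y})$ is well defined and unchanged along the involution (immediate, as $2^{k-1}\vec{e}_{j}\equiv\vec 0\pmod 2$), that the involution preserves both $R_{2^{k}}(m)$ and Good type, and that the two Taylor computations are carried out in a basis where $Q$ is in Jordan form at $2$, so that the unimodular block is genuinely a direct summand of $A_{Q}$. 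Alternatively, one may simply invoke the general local‑density lifting lemma of Hanke \cite{Hanke}, of which this statement is the quaternary specialization.
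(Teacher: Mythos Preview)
The paper gives no proof of this statement; it simply cites \cite[Lemma~3.2]{Hanke}. Your proposal, by contrast, supplies a direct Hensel-lifting argument, and it is correct: for odd $p$ the Good hypothesis forces $\nabla Q(\vec y_0)\not\equiv\vec 0\pmod p$ (via the unimodular block of the Jordan form), so each fiber of the reduction map is an affine hyperplane of size $p^{3}$; for $p=2$ the gradient $2A_Q\vec y_0$ vanishes identically mod $2$, and your fixed-point-free involution $\vec y\mapsto\vec y+2^{k-1}\vec e_{j(\vec y)}$ (with $j$ the least index in the unimodular block at which $A_Q\vec y$ is odd) correctly pairs the liftable and non-liftable halves of $R_{2^{k}}^{\operatorname{Good}}(m)$, the constraint $k\ge 3$ entering exactly through the quadratic error term $2^{2k-2}Q(\vec e_j)$. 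The checks you flag as delicate (well-definedness and stability of $j$, preservation of Good type and of $R_{2^k}(m)$, independence of the lift) all go through for $k\ge 3$ as you indicate. This is essentially the content of Hanke's lemma, which you yourself invoke at the end as an alternative, so your route and the paper's citation point to the same place---you have simply written out what the paper leaves to the reference.
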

\begin{proof}
See \cite[Lemma 3.2]{Hanke}.
\end{proof}
\begin{theorem}
The map
\begin{eqnarray*}
\pi_Z : R_{p^k}^{\operatorname{Zero}}(m) & \to & R_{p^{k-2}} \left( \dfrac{m}{p^2} \right) \\
\vec{x} &\mapsto& p^{-1} \vec{x} \pmod{p^{k-2}}
\end{eqnarray*}
is a surjective map with multiplicity $p^4$.
\end{theorem}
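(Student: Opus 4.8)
The plan is to set up the reduction map $\pi_Z$ explicitly and verify surjectivity and the multiplicity count by a direct combinatorial argument on vectors modulo powers of $p$. First I would note that any $\vec{x} \in R_{p^k}^{\operatorname{Zero}}(m)$ satisfies $\vec{x} \equiv \vec{0} \pmod p$, so we may write $\vec{x} = p\vec{y}$ for some $\vec{y} \in (\mathbb{Z}/p^{k-1}\mathbb{Z})^4$ (well-defined since scaling by $p$ only sees $\vec{y}$ modulo $p^{k-1}$). Then $Q(\vec{x}) = p^2 Q(\vec{y})$, so the congruence $Q(\vec{x}) \equiv m \pmod{p^k}$ forces $p^2 \mid m$ and becomes $Q(\vec{y}) \equiv m/p^2 \pmod{p^{k-2}}$; thus the map $\vec{x} \mapsto p^{-1}\vec{x} \bmod p^{k-2}$ indeed lands in $R_{p^{k-2}}(m/p^2)$. (If $p^2 \nmid m$ both sides are empty and there is nothing to prove, so assume $p^2 \mid m$.)

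Next I would establish surjectivity: given any $\vec{z} \in R_{p^{k-2}}(m/p^2)$, lift it arbitrarily to $\vec{y} \in (\mathbb{Z}/p^{k-1}\mathbb{Z})^4$, and set $\vec{x} = p\vec{y} \in (\mathbb{Z}/p^k\mathbb{Z})^4$. One checks $\vec{x} \equiv \vec{0} \pmod p$ so $\vec{x}$ is of Zero type, and $Q(\vec{x}) = p^2 Q(\vec{y}) \equiv p^2 \cdot (m/p^2) = m \pmod{p^k}$ because $Q(\vec{y}) \equiv m/p^2 \pmod{p^{k-2}}$ multiplied by $p^2$ gives a congruence mod $p^k$. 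Hence $\vec{x} \in R_{p^k}^{\operatorname{Zero}}(m)$ and $\pi_Z(\vec{x}) = \vec{y} \bmod p^{k-2} = \vec{z}$.

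For the multiplicity count, I would parametrize the fiber over a fixed $\vec{z} \in R_{p^{k-2}}(m/p^2)$. An element $\vec{x} = p\vec{y}$ of the fiber is determined by $\vec{y} \in (\mathbb{Z}/p^{k-1}\mathbb{Z})^4$ subject to $\vec{y} \equiv \vec{z} \pmod{p^{k-2}}$; but we must remember $\vec{x}$ itself lives in $(\mathbb{Z}/p^k\mathbb{Z})^4$, so distinct $\vec{x}$ correspond to $\vec{y}$ taken modulo $p^{k-1}$, and the condition $\vec{y} \equiv \vec{z} \pmod{p^{k-2}}$ leaves exactly $p$ choices in each of the $4$ coordinates, i.e.\ $p^4$ preimages — provided one also checks that every such $\vec{y}$ automatically satisfies $Q(p\vec{y}) \equiv m \pmod{p^k}$, which holds since $Q(\vec{y}) \equiv Q(\vec{z}) \equiv m/p^2 \pmod{p^{k-2}}$ whenever $\vec{y} \equiv \vec{z} \pmod{p^{k-2}}$. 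This gives multiplicity exactly $p^4$ at every point of the target, so the map has multiplicity $p^4$ in the sense defined above.

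The main obstacle I anticipate is bookkeeping the moduli carefully — in particular confirming that $\vec{x} = p\vec{y} \bmod p^k$ depends only on $\vec{y} \bmod p^{k-1}$ (not $\bmod\, p^{k-2}$), so that the fiber genuinely has size $p^4$ rather than $p^8$ or $1$, and making sure the edge cases (small $k$, or $p^2 \nmid m$) are handled or explicitly excluded. There is also a mild subtlety that $Q$ being classically integral guarantees $Q(p\vec{y}) = p^2 Q(\vec{y})$ exactly (no cross-term parity issues), which I would flag but not belabor. None of this requires the Jordan decomposition; it is purely the scaling behavior of $Q$ together with counting lifts, so I expect the argument to be short once the indexing is pinned down.
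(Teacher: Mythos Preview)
Your argument is correct and complete: the key points (writing $\vec{x}=p\vec{y}$ with $\vec{y}$ well-defined modulo $p^{k-1}$, the scaling $Q(p\vec{y})=p^2Q(\vec{y})$, and the $p^4$ lifts from $(\mathbb{Z}/p^{k-2}\mathbb{Z})^4$ to $(\mathbb{Z}/p^{k-1}\mathbb{Z})^4$) are all handled properly, and your treatment of the edge cases is fine.

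The paper itself does not prove this statement at all --- it simply cites \cite[pg.~359]{Hanke}. Your direct combinatorial argument is exactly the standard one (and is essentially what appears in Hanke's paper), so there is no meaningful difference in approach; you have just supplied the details that the present paper omits by reference.
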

\begin{proof}
See \cite[pg. 359]{Hanke}.
\end{proof}

\begin{theorem} \leavevmode
\begin{itemize} 
\item \textit{Bad-Type-I:} This occurs when $\mathbb S_1 \neq \emptyset$ and $\vec{x}_{\mathbb S_1}\not\equiv \vec{0}$. The map 
\begin{eqnarray*}
\pi_{B'} : R_{{p}^k, Q}^{\operatorname{Bad-1}}(m) & \to & R_{{p}^{k-1}, Q'}^{\operatorname{Good}} \left(\dfrac{m}{p} \right)
\end{eqnarray*}
which is defined for each index $j$ by
\begin{center}
$$\begin{array}{ccc} {x_j} \mapsto p^{-1} {x_j} & v_j ' = v_j+1, & j \in \mathbb S_0 \\ {x_j} \mapsto {x_j} & v_j' = v_j-1, & j \not\in \mathbb S_0 \end{array}$$
\end{center}
is surjective with multiplicity $p^{s_1+s_2}$. \\

\item \textit{Bad-Type-II:} This can only occur when $\mathbb S_2 \neq \emptyset$ and involves either $\mathbb S_1 = \emptyset$ or $\vec{x}_{\mathbb S_1} \equiv \vec{0}$. The map
\begin{eqnarray*}
\pi_{B''} : R_{{p}^k, Q}^{\operatorname{Bad-II}}(m) & \to & R_{{p}^{k-2}, Q''}^{\vec{x}_{\mathbb S_2} \not\equiv \vec{0}} \left( \dfrac{m}{{{p}}^2} \right)
\end{eqnarray*}
which is defined for each index $j$ by
\begin{center}
$$\begin{array}{ccc} {x_j} \mapsto  p^{-1} {x_j} & v_j '' = v_j, & j \in \mathbb S_0 \cup \mathbb S_1 \\ {x_j} \mapsto {x_j} & v_j'' = v_j-2, & j \not\in \mathbb S_2 \end{array}$$
\end{center}
is surjective with multiplicity $p^{8-s_0-s_1}$.
\end{itemize}
\end{theorem}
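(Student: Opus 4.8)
The plan is to prove both parts by the same device: an explicit change of variables over $\mathbb{Z}_p$ that divides out the power of $p$ which the Bad-type hypothesis forces to divide $Q(\vec{x})$, paralleling the companion lemmas for Good and Zero type and the general local-density framework of Hanke \cite{Hanke}. I would fix a Jordan splitting $Q \equiv_{\mathbb{Z}_p} \sum_j p^{v_j} Q_j(x_j)$ once and for all, with each $Q_j$ a $\mathbb{Z}_p$-unimodular form of dimension $\le 2$ (a unit times a single square when $p$ is odd), and first run the argument in the diagonal case $p$ odd, where all the structure is already visible, before returning to $p=2$.

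For \textit{Bad-Type-I} (at level $k \ge 1$): given $\vec{x} \in R^{\operatorname{Bad-1}}_{p^k,Q}(m)$, the fact that $\vec{x}$ is not of Good type forces $x_j \equiv \vec{0} \pmod p$ on every block $j \in \mathbb{S}_0$, so I can write $x_j = p\,\widehat{x}_j$ there. Using $v_j \ge 1$ off $\mathbb{S}_0$ together with the homogeneity identity $Q_j(p\,\widehat{x}_j) = p^2 Q_j(\widehat{x}_j)$, I would factor $Q(\vec{x}) = p\,Q'(\vec{x}')$, where $Q'$ is the form with Jordan exponents $v_j+1$ on $\mathbb{S}_0$ and $v_j-1 \ge 0$ elsewhere (so its unimodular constituent is exactly $\bigoplus_{j \in \mathbb{S}_1} Q_j$) and $\vec{x}'$ replaces $x_j$ by $\widehat{x}_j$ on $\mathbb{S}_0$. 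This already shows $p \mid m$, makes $\pi_{B'}(\vec{x}) := \vec{x}' \bmod p^{k-1}$ well defined into $R_{p^{k-1},Q'}(m/p)$, and shows that the hypothesis $\vec{x}_{\mathbb{S}_1} \not\equiv \vec{0}$ is precisely the statement that $\vec{x}'$ is of Good type for $Q'$, so the stated target is correct. For surjectivity and the multiplicity I would argue both at once: over a fixed $\vec{y}$ in the target, the $\mathbb{S}_0$-coordinates of any preimage are pinned down (they are $p$ times the corresponding coordinate of $\vec{y}$), while on each block $j \notin \mathbb{S}_0$ one may lift $y_j$ from $\mathbb{Z}/p^{k-1}$ to $\mathbb{Z}/p^k$ in $p^{\dim Q_j}$ ways; the key observation is that $p^{v_j-1}Q_j(x_j) \bmod p^{k-1}$ is unchanged when $x_j$ is altered by a multiple of $p^{k-1}$, so every such lift solves $Q(\vec{x}) \equiv m \pmod{p^k}$ and is again of Bad-Type-I. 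Hence every fibre has size $\prod_{j \notin \mathbb{S}_0} p^{\dim Q_j} = p^{s_1+s_2}$.

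For \textit{Bad-Type-II} (at level $k \ge 2$) the argument is parallel but one divides out $p^2$: the hypothesis gives $\vec{x}_{\mathbb{S}_0} \equiv \vec{x}_{\mathbb{S}_1} \equiv \vec{0}$ and $\vec{x}_{\mathbb{S}_2} \not\equiv \vec{0} \pmod p$, so writing $x_j = p\,\widehat{x}_j$ on $\mathbb{S}_0 \cup \mathbb{S}_1$ and using $v_j \ge 2$ on $\mathbb{S}_2$ yields $Q(\vec{x}) = p^2\,Q''(\vec{x}'')$ with $Q''$ of exponents $v_j$ on $\mathbb{S}_0 \cup \mathbb{S}_1$ and $v_j-2$ on $\mathbb{S}_2$; thus $p^2 \mid m$, and $\pi_{B''}(\vec{x}) := \vec{x}'' \bmod p^{k-2}$ lands in $R_{p^{k-2},Q''}(m/p^2)$ while visibly preserving the condition $\vec{x}_{\mathbb{S}_2} \not\equiv \vec{0}$. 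The multiplicity now separates by block type: over a fixed target point a block in $\mathbb{S}_0 \cup \mathbb{S}_1$ contributes $p^{\dim Q_j}$ lifts (of $\widehat{x}_j$ from level $k-2$ to level $k-1$) and a block in $\mathbb{S}_2$ contributes $p^{2\dim Q_j}$ lifts (of $x_j$ from level $k-2$ to level $k$), each lift still solving the congruence and still of Bad-Type-II by the same invariance observation; since the form is quaternary, $s_0+s_1+s_2 = 4$, so the product is $p^{(s_0+s_1)+2s_2} = p^{8-s_0-s_1}$.

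The conceptual heart is the free-lifting observation underlying surjectivity together with the uniform fibre size; once that is in hand the rest is accounting. I expect the main technical nuisance to be the case $p = 2$, where a Jordan block $Q_j$ may be binary: there one must recheck the invariance $Q_j(x_j + p^\ell \vec{t}) \equiv Q_j(x_j) \pmod{p^\ell}$ (which still holds, now for a binary form, since only cross and squared terms carrying a factor of $p^\ell$ are introduced), confirm that the Good/Bad-type conditions and the counts $p^{\dim Q_j}$, $p^{2\dim Q_j}$ are read componentwise, and match up the admissible ranges of $k$ with those in the preceding lemmas. This is routine, but it is where the care is needed.
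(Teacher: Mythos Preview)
Your argument is correct and is precisely the standard change-of-variables computation; the paper itself does not supply a proof at all but simply cites \cite[pg.~360]{Hanke}, where the same argument (dividing through by the forced power of $p$ after substituting $x_j = p\widehat{x}_j$ on the appropriate blocks, then counting lifts blockwise) is carried out. So you have reconstructed the referenced proof rather than taken a different route.
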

\begin{proof}
Again, see \cite[pg. 360]{Hanke}.
\end{proof}
In practice, instead of exact formulas for $a_E(m)$, we employ effective lower bounds on $a_E(m)$.
Given a form $Q$ with level $N_Q$ and associated character $\chi_Q$, then for all $m$ locally represented by $Q$ we have
\begin{equation*}
a_E(m) \geq C_E m \left(\displaystyle\prod_{\substack{p \nmid N_Q, p \mid m \\ \chi(p) = -1}} \dfrac{p-1}{p+1} \right),
\end{equation*}
where $C_E$ is a highly technical constant achieved from reasonable lower bounds for all $\beta_p(m)$. An exact formula for $C_E$ is given in Theorem $5.7(b)$ of \cite{Hanke}. \\
\\

We similarly compute an upper bound on $\lvert a_C(m) \rvert$, due to Deligne \cite{Deligne}.

For a given $Q$, there exists a constant $C_f$, determined by writing $C_{Q}(z)$ as a linear combination of normalized Hecke eigenforms (and shifts thereof), such that for all $m \in \mathbb{N}$
$$\lvert a_C(m) \rvert \leq C_f  \sqrt{m} \hspace{.05in} \tau(m),$$
where $\tau(m)$ counts the number of positive divisors of $m$. Roughly speaking, $a_E(m)$ is about size $m$ and $a_C(m)$ is about size $\sqrt{m}$.



\section{Computational Methods}
\label{ComputationalMethods}

Following the ideas of \cite{BH} and \cite{Hanke} with the notation of \cite{BH}, we know that any $m \in \mathbb{N}$ is represented by $Q$ if it is locally represented, has bounded divisibility at all anisotropic primes, and if the following inequality holds:
\begin{eqnarray}
\dfrac{\sqrt{m'}}{\tau(m)} \prod_{\substack{p \mid m, p \nmid N \\ \chi(p) = -1}} \dfrac{p-1}{p+1} &>& \dfrac{C_f}{C_E} \;,
\end{eqnarray}
where $m'$ is the largest divisor of $m$ with no anisotropic prime factors.
Define $B(m)$ to be the left side of $(1)$. Note that $B(m)$ is multiplicative. Hence, as $m$ becomes divisible by more primes, $B(m)$ is an increasing function. (This is not true in general; it is possible for primes $p < q$ to have $B(p) > B(q)$, as described below in Lemma \ref{BpLemma}.) Therefore, there are only finitely many $m \in \mathbb N$ where 
\begin{eqnarray}
B(m) &\leq& \dfrac{C_f}{C_E}.
\end{eqnarray}
We call locally represented $m \in \mathbb N$ satisfying $(2)$ \textbf{eligible}.

\subsection{Generating Eligible Numbers}
\label{GeneratingNumbers}
Since $\tfrac{C_f}{C_E}$ can be quite large in comparison to $B(m)$, we require a faster method of computing eligible numbers than sequentially calculating all numbers such that inequality $(2)$ holds. Therefore, we introduce the concept of an {eligible prime}. Define
\[C_B := \prod_{\substack{p \\ B(p) < 1}} B(p).\]
Note that this product will never include primes higher than $7$, since $B(p) > 1$ for any prime $p \geq 11$. 
An \textbf{eligible prime} is then a prime $p$ such that
\begin{eqnarray}
B(p) &\leq& \dfrac{C_f}{C_E C_B}.
\end{eqnarray}
Observe that not all eligible primes are eligible numbers. To generate a list of eligible primes, we enumerate through all primes until $(3)$ no longer holds. We note, however, that we must also check the next prime after that for which $(3)$ fails due to the following result (which was not mentioned in \cite{BH}):
\begin{lemma}
\label{BpLemma}
For any primes $p < q$, if $B(p) > B(q)$ then $q-p \leq 2$.
\end{lemma}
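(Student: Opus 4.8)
The plan is to analyze the function $B(p)$ as a function of a prime $p$ and show that whenever the inequality $B(p) > B(q)$ holds for primes $p < q$, the primes must be close together. First I would recall the explicit shape of $B$ on primes. For a prime $p$, we have $\tau(p) = 2$, and the Euler-type product in the numerator of $(1)$ contributes a factor $\tfrac{p-1}{p+1}$ precisely when $p \nmid N$ and $\chi(p) = -1$, and contributes nothing otherwise. Also $m' = p$ unless $p$ is an anisotropic prime, in which case $m' = 1$. So up to the harmless anisotropic exceptions (of which there are only finitely many, and which force $m'=1$ making $B(p)$ very small) we have, roughly,
\begin{equation*}
B(p) = \frac{\sqrt{p}}{2} \cdot \varepsilon(p), \qquad \varepsilon(p) = \begin{cases} \tfrac{p-1}{p+1} & \chi(p) = -1,\ p \nmid N,\\ 1 & \text{otherwise.}\end{cases}
\end{equation*}
Since $\tfrac{p-1}{p+1} \to 1$, the dominant behavior is $B(p) \sim \tfrac12\sqrt p$, which is strictly increasing; the only way to have $B(p) > B(q)$ with $p < q$ is to exploit the $\varepsilon$-factor, i.e.\ to have $\chi(q) = -1$ (so $q$ is penalized) while $\chi(p) = 1$ or $p \mid N$ (so $p$ is not).

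The key estimate is then purely numerical: I would show that $\tfrac{\sqrt p}{2}$ and $\tfrac{\sqrt q}{2}\cdot\tfrac{q-1}{q+1}$ can only be out of order when $q - p$ is very small. Concretely, one has $B(q) \geq \tfrac{\sqrt q}{2}\cdot\tfrac{q-1}{q+1}$, and $B(p) \leq \tfrac{\sqrt p}{2}$ (ignoring the anisotropic case, which only helps). So $B(p) > B(q)$ forces
\begin{equation*}
\sqrt p > \sqrt q \cdot \frac{q-1}{q+1}, \quad\text{i.e.}\quad \sqrt{\frac{p}{q}} > \frac{q-1}{q+1} = 1 - \frac{2}{q+1}.
\end{equation*}
Squaring, $\tfrac{p}{q} > \bigl(1 - \tfrac{2}{q+1}\bigr)^2 > 1 - \tfrac{4}{q+1}$, so $q - p < \tfrac{4q}{q+1} < 4$, giving $q - p \leq 3$. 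To sharpen $3$ down to $2$ I would note that if $q - p = 3$ with $p < q$ both prime, then one of $p, q$ is even, hence $p = 2$, $q = 5$; this single case is checked by hand (with the actual values of $N$ and $\chi$ that can arise, or simply by observing $B(2) \le \tfrac{\sqrt2}{2} < \tfrac{\sqrt5}{2}\cdot\tfrac{2}{3} = B(5)$ when $\chi(5)=-1$, and similarly in the other subcases), ruling it out. Thus $q - p \leq 2$.

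The main obstacle is bookkeeping around the finitely many exceptional primes: the primes dividing the level $N$, and the anisotropic primes. For $p \mid N$ the factor $\varepsilon(p) = 1$ actually makes $B(p)$ \emph{larger}, which is exactly the direction that could violate monotonicity, so these must be handled honestly rather than dismissed — but the estimate above already allows $\varepsilon(p) = 1$, so it covers them. The anisotropic case sends $m' \mapsto 1$ and hence $B(p) \mapsto$ (something tiny), which can only make $B(p) > B(q)$ \emph{harder}, so it is benign; still, one should remark on it to be rigorous. Beyond that the argument is a one-line real-variable inequality, so I expect the write-up to be short once the reduction to $\sqrt{p/q} > (q-1)/(q+1)$ is in place.
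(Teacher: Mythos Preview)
Your approach matches the paper's: both reduce to the single case $B(p)=\tfrac{\sqrt p}{2}$ versus $B(q)=\tfrac{\sqrt q}{2}\cdot\tfrac{q-1}{q+1}$ and then estimate the ratio directly. Your parity step---observing that $q-p=3$ with both prime forces $\{p,q\}=\{2,5\}$, to be checked by hand---is actually a genuine improvement over the paper, which instead asserts that the rational-function lower bound at $q=p+3$ already exceeds $1$; in fact that ratio is $\frac{p^3+7p^2+16p+12}{p^3+8p^2+16p}<1$ for $p\ge 4$, so your parity argument is what really closes the gap. One small caveat you share with the paper: when the \emph{larger} prime $q$ is anisotropic, $B(q)$ collapses to $1/2$, which makes $B(p)>B(q)$ easier rather than harder, so that case is not ``benign'' in the direction you claim---though in the algorithmic context the finitely many anisotropic primes are known in advance and handled separately.
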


\begin{proof}
The only case needing consideration is if $p$ and $q$ are not anisotropic and $\chi(p) \neq -1$ and $\chi(q) = -1$, that is, if 
\[B(p) = \dfrac{\sqrt{p}}{2} \hspace{.5in} \text{ and } \hspace{.5in} B(q) = \dfrac{\sqrt{q}}{2} \dfrac{q - 1}{q + 1}.\]
Note that if $q \leq p+2$, then
\[\left(\dfrac{B(q)}{B(p)}\right)^2 \leq \dfrac{p^3 + 4p^2 + 5p + 2}{p^3 + 6p^2 + 9p},\]
which is always less than $1$; therefore, $B(p) > B(q)$ when $q-p \leq 2$.

On the other hand, if $q \geq p + 3$ then
\[\left(\dfrac{B(q)}{B(p)}\right)^2 \geq \dfrac{p^3 + 9p^2 + 25p + 21}{p^3 + 8p^2 + 16p},\]
which is always greater than $1$, so $B(p) < B(q)$ if $q-p \geq 3$.
\end{proof}

Once our list of eligible primes is generated, we sort by $B(p)$ in order to implement the following algorithm, which is also outlined in \cite[\S $4.3.1$]{BH}.

Since any squarefree eligible number is the product of a finite number of distinct eligible primes, we take the product of the smallest eligible primes $p_{\ell}$ until $p_1 \cdots p_{\ell+1}$ is not eligible. We then know that any squarefree eligible number will be the product of at most $\ell$ distinct eligible primes.

Set $a := p_1 \cdots p_r$ to be the product of the first $r$ eligible primes, for each $1 \leq r \leq \ell$. While $a$ is eligible, we replace $p_r$ by $p_{r+1}$ and continue replacing this single prime until $a$ is no longer eligible. We then repeat the process, replacing the last two primes $p_{r-1}$ and $p_r$ with $p_r$ and $p_{r+1}$, respectively. 

We continue this either until we either have run out of eligible primes or until we can no longer increment.\\
\\

Once we have a set of squarefree eligible numbers (including $1$), we determine their representability by the form $Q$ using techniques described in more detail in \ref{ComputingRep}. This results in a finite set $S_1$ of squarefree numbers which  fail to be represented by $Q$. However, as $S_1$ is comprised only of squarefree integers, we do not yet have a complete list of possible exceptions; for instance, if $Q$ excepts $2$ and $8$, then $S_1 = \{2\}$. Thus, we construct a new set $S_2$ of exceptions of the form $sp^2$, where $s \in S_1$, $p$ is an eligible prime, and $B(sp^2) \leq \frac{C_f}{C_E}$. We repeat this process, continuing until $S_h = \emptyset$ for some $h$. We then take $\bigcup_{1 \leq i \leq h-1} S_i$ to be the entire set of exceptions. Note that we need not check $B\left(sp^2\right)$ for $p$ larger than our max eligible prime since $B\left(p^j\right) > B\left(p\right)$ for $j \geq 2$, unless $p = 2$ or $p$ is anisotropic (and our largest anisotropic prime is always less than our largest eligible prime); this means that if $sp$ is not eligible, then neither is $sp^2$.

\subsection{Computing Representability}
\label{ComputingRep}
Given a set of eligible numbers, we proceed to check their representability by the form $Q$. The naive approach of simply computing the theta series up to the largest eligible number is infeasible for most forms. We instead check representability using a \textbf{split local cover} of $Q$, that is, a form $Q'$ that locally represents the same numbers as $Q$ and can be written as $Q' = dx^2 \oplus T$ for a minimal $d \in \mathbb{N}$ and ternary subform $T$. Furthermore, $Q'$ has the property that global representation by $Q'$ implies global representation by $Q$. 
Therefore, we check the global representability of each eligible number $a$ by $Q'$ and hence by $Q$ by checking if $a - dx^2$ is globally represented by $T$.
We thus compute the theta series of $T$ up to precision $Y = \lceil 2dc\sqrt{X} \rceil$, where $X$ is the largest eligible number and $c$ is a constant to allow for multiple attempts at checking $a - dx^2$.
Any numbers that fail to be found as represented by $T$ are then handled by computing the theta series of $Q$.\\
\\
Despite the improvements gained by the split local cover, we still encounter memory and speed issues on many forms. To deal with these, we use an \textbf{approximate boolean theta function}, also described in \cite[\S $4.3.2$]{BH}. This stores a single bit for each number up to $Y$ indicating whether or not it is represented by $T$. Additionally, rather than computing the entirety of the theta series, we evaluate $Q$ only at the vectors in the intersection of an appropriately chosen small rectangular prism and the ellipsoid $T\left(\vec{y}\right) \leq Y$. This gives far fewer vectors to check, at the expense of giving potential false negatives for numbers who are represented by vectors outside of the prism.\\
\\
The combined use of the split local cover and approximate boolean theta function significantly improves runtime speed and memory usage. According to \cite{BH}, it requires storing $\sqrt{X}$ bits and has a runtime of $O\left(X^{1/4}\right)$. This is a substantial improvement over the naive method, which stores $X$ bits and takes $O\left(X^2\right)$ time. We saw such improvement first hand; the form
\[3x^2 -2xy + 4y^2 - 4xz -2yz + 6z^2 -2xw + 8yw - 2zw + 7z^2,\]
which fails to represent the pair $\{1,2\}$, took approximately $31$ minutes to run using a split local cover and approximate boolean theta function. By contrast, our systems were unable to handle the memory requirements for computing the form without a boolean theta function. Even using a split local cover and boolean theta function without approximation took $4$ hours and $25$ minutes.



\section{Higher Escalations}
\label{HigherEsc}
We now aim to generalize our results by deducing all possible pairs $\left\{m,n\right\}$ for which there exists a classically integral positive definite quadratic form in any number of variables representing exactly the set $\mathbb{N}-\{m,n\}$. We begin by considering the quaternary forms obtained from our previous escalations which except more than $2$ numbers. We classify these into three types:
\begin{itemize}
\item Type $A$: Forms that except only a finite set of numbers;
\item Type $B$: Forms that except infinitely many numbers but have no local obstructions; and
\item Type $C$: Forms that have local obstructions.
\end{itemize}

For a given form of Type $A$, let $L := \{m,n_1,\ldots,n_k\}$ be its ordered set of exceptions. To determine if there is a higher escalation excepting precisely the pair $\{m,n_i\}$ for $1 \leq i \leq k$, we escalate by the truant and check that $\{m,n_i\}$ still fails to be represented. We repeat this process until either $n_i$ is represented, making this pair an impossibility for escalations of this form, or until  $L-\{m,n_i\}$ is represented.\\
\\

Forms of Type $B$ are due to unbounded divisibility by an anisotropic prime $p$ at integers not represented by the form. (All forms we encountered have at most one anisotropic prime.) Consequently, we conjecture that all but finitely many exceptions are contained in 
\[
 \mathcal{F} := \bigcup_{k \in S} F_k,
\]
where $S$ is a finite subset of $\mathbb{N}$ and $F_k=\left\{kp^j \mid  j \in \mathbb{N}\right\}$. We then compute the representability of all eligible numbers not in $\mathcal{F}$ by using the methods of Section \ref{ComputingRep}, resulting in a finite set of exceptions not in $\mathcal{F}$. For each $k \in S$, we observe that escalations by $k$ and $kp$ will suffice for representation of all eligible numbers in $F_k$. However, in practice, a single escalation usually results in a finite set of exceptions, to which we can then apply the methods of Type $A$.

The methods of Type $B$ forms do not directly extend to Type $C$ forms, as it is not immediately clear whether local obstructions only cause finitely many squarefree exceptions. 

Instead, we seek to generalize the notion of the ``$10$-$14$ switch,'' employed by Bhargava and Hanke in \cite[\S $5.2$]{BH}. This technique exploits the commutativity of the escalation process by escalating a ternary form first by the truant(s) of the quaternaries with local obstructions that it generates, and then by the truant of said ternary. In their case, this removes all local obstructions. However, a single switch is not sufficient for our case; therefore, we generalize to multiple potential switches. First, we escalate the quaternary forms as usual, then for each resulting quinary form we search for a quaternary subform with no local obstructions. We find such subforms for all quinary Type $C$ escalations, to which we can then apply the methods of Type $A$ or Type $B$.



\section{Proofs of Main Results}
We now state with more detail our main results.
\setcounter{theorem}{0}
\begin{theorem}
There are exactly $73$ sets $S=\{m,n\}$ (with $m<n$) for which there exists a classically integral positive definite quadratic form that represents precisely those natural numbers outside of $S$. 

\renewcommand{\arraystretch}{1.3}
\begin{table}[ht]
\centering
\caption{Possible Excepted Pairs}
\begin{tabular}{|c|c|c|}
\hline
$m$  & $n$ & \normalfont{Minimum Required Variables} \\ 
\hline 
\multirow{2}{*}{$1$} & $2, 3, 4, 5, 6, 7, 9, 10, 11, 13, 14, 15, 17, 19, 21, 23, 25, 26, 30, 41$ & $4$ \\
\cline{2-3}
 & $55$ & $5$ \\
\hline
\multirow{2}{*}{$2$}  & $3,5,6,8,10,11,15,18,22,30,38 $ & $4$  \\
\cline{2-3}
 & $14, 50$ & $5$ \\
\hline
$3$  & $6,7,11,12,19,21,27,30,35,39  $ & $4$  \\
\hline
\multirow{2}{*}{$5$}  & $7,10,13,14,20,21,29,30,35  $ & $4$   \\
\cline{2-3}
 & $37, 42, 125$ & $5$ \\
\hline 
\multirow{2}{*}{$6$}  & $15$ & $4$      \\
\cline{2-3}
 & $54$ & $5$ \\
\hline
$7$  & $10,15,23,28,31,39,55$ & $4$  \\
\hline
\multirow{2}{*}{$10$} & $15,26,40,58$ & $4$ \\
\cline{2-3}
 & $250$ & $5$ \\
\hline
$14$ & $30,56,78$ & $4$  \\
\hline
\end{tabular}
\end{table}

\end{theorem}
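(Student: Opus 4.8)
The plan is to turn the theorem into a finite computation: enumerate all candidate forms by an escalation search that may ``skip'' up to two values, determine the exact exception set of each candidate with the modular-form machinery of \S\ref{ComputationalMethods}, and read off the pairs that actually occur. Concretely, starting from the trivial lattice I would run the escalation of \S\ref{escalation} with a budget of two omissions: at a node with truant $t$, branch into ``escalate by $t$'' and, provided fewer than two values have been omitted so far on this branch, ``omit $t$, record it as a tentative exception, and recompute the truant on the remaining positive integers.'' As in \cite{BH}, the constraints $\det(A_{\widehat{Q}})>0$ and $a_{in}\in 2\mathbb{Z}$ leave only finitely many children per node; since truants of escalators are bounded and every four-dimensional ordinary escalator is universal (the $15$-Theorem), the tree is finite, and running it to dimension four yields finitely many quaternary escalator lattices, each carrying at most two tentatively omitted values. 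This also pins down $m$: a form excepting $\{m,n\}$ represents $1,\dots,m-1$, so if $m>15$ it would represent $1,\dots,15$ and be universal, a contradiction; and since representation is monotone along an escalation chain, the least omitted value $m$ is the first value omitted, hence a truant of an ordinary escalator, so $m\in\{1,2,3,5,6,7,10,14,15\}$.

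For each candidate quaternary $Q$ I would compute its level $N_Q$, determinant $D_Q$, and character $\chi_Q$, and split $\Theta_Q=E_Q+C_Q$. Siegel's formula together with Hanke's Good/Zero/Bad reduction maps gives the effective lower bound $a_E(m)\ge C_E\, m\prod_{p\mid m,\,p\nmid N_Q,\,\chi_Q(p)=-1}\tfrac{p-1}{p+1}$, while Deligne's bound gives $|a_C(m)|\le C_f\sqrt{m}\,\tau(m)$; hence every locally represented $m$ with bounded divisibility at the anisotropic primes and $B(m)>C_f/C_E$ is represented by $Q$. The remaining finitely many \emph{eligible} $m$ I would enumerate with the eligible-prime sieve of \S\ref{GeneratingNumbers} (using Lemma~\ref{BpLemma} to catch the extra prime beyond where $(3)$ fails), then test for representability via a split local cover $Q'=dx^2\oplus T$ and an approximate boolean theta function as in \S\ref{ComputingRep}. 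The genuinely unrepresented survivors, together with anything excluded by a local or anisotropic obstruction, give the exact exception set of $Q$; I keep $Q$ precisely when that set has size two, and record the pair $\{m,n\}$.

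A candidate quaternary excepting more than two values is classified into the Types $A$, $B$, $C$ of \S\ref{HigherEsc} and handled by further escalation. For Type $A$ (finitely many exceptions) I escalate again by the truant and recompute the exception set, iterating until either a forced exception becomes represented (a dead branch) or exactly two exceptions remain. For Type $B$ (infinitely many exceptions but no local obstruction, caused by unbounded divisibility at an anisotropic prime $p$) the surplus exceptions lie in $\mathcal{F}=\bigcup_{k\in S}\{kp^j\}$, so I escalate by suitable $k$ and $kp$ and then apply the Type $A$ analysis. For Type $C$ (genuine local obstructions) I apply the generalized ``$10$-$14$ switch'': escalate a ternary subform first by the truant(s) of the locally obstructed quaternaries it generates, then by its own truant, obtaining a quinary subform free of local obstructions, after which the Type $A$/$B$ analysis again applies. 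Collecting the surviving pairs over the nine values of $m$, deduplicating, and recording for each the least dimension of a witnessing form produces exactly the $73$ pairs listed, with minimum required variables four or five.

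The conceptual content beyond \cite{BH} is modest; the main obstacle is the bookkeeping and the finite-but-large verifications. In particular one must produce honest constants $C_E$ and $C_f$ for each of the numerous forms and push the split/boolean theta computations to the required precision $Y=\lceil 2dc\sqrt{X}\rceil$ while controlling the false negatives that the rectangular-prism truncation can introduce; and one must check that the higher-escalation process genuinely terminates — that every Type $C$ quinary really does contain a locally unobstructed quaternary subform, and that no witnessing pair is lost by stopping at dimension five. This last structural point is where the argument actually has to be made rather than quoted.
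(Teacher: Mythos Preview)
Your proposal is correct and follows essentially the same route as the paper: escalation with a budget of omitted values to enumerate candidates, the Eisenstein/cusp bounds and eligible-number sieve of \S\ref{ComputationalMethods} to pin down each form's exact exception set, and the Type~$A$/$B$/$C$ trichotomy of \S\ref{HigherEsc} to push beyond dimension four. Two small corrections of detail rather than strategy: first, it is not true that every four-dimensional ordinary escalator is universal (the $15$-Theorem escalation terminates only in dimension five on some branches), though finiteness of your tree still holds for the reasons you give; second, for Type~$C$ the paper does not re-escalate from a ternary in the manner of the original $10$-$14$ switch but instead escalates the obstructed quaternary directly to a quinary and then searches that quinary for an unobstructed \emph{quaternary} sublattice --- your phrase ``quinary subform free of local obstructions'' should read ``quaternary subform.'' The paper also organizes the search slightly differently (a two-pass scheme: first escalate with $S=\{m\}$ to bound $n$ by the maximal quaternary truant, then re-escalate with each fixed $\{m,n\}$), but this is computationally equivalent to your single branching tree.
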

\setcounter{theorem}{11}

To generate our candidate forms excepting all pairs $\{m, n\}$, we borrow from the theory of escalator lattices. By the 15 Theorem \cite{Bhargava}, we know that $m \leq 15$, so it remains to find the possible values of $n$.

For quaternary forms, we find the maximum value of $n$ by fixing an $m$ and pursuing our standard method of escalating by a vector with a norm equal to the truant. We implement the escalation process outlined in Section \ref{escalation} using the free and open-source computer algebra system SageMath \cite{Sage} and the \texttt{QuadraticForm} class in particular. Once we reach four-dimensional forms, we use the maximal truant of the forms on our list as the upper bound for $n$. 

When $m = 6$, we find that none of the four-variable candidates generated with our usual method except any additional values. Thus we approach this case by explicitly fixing each $n \leq 15$ (because $15$ is the smallest truant of any ternary escalator excepting $6$) to determine whether any forms excepting $\{6, n\}$ exist.  

For all other $m$, we begin the escalation process anew, this time with a bounded range for $n$ given $m$. Along the way, we remove any forms that represent $m$ or $n$ and any which are not positive definite. At each dimension we also iterate through our list to remove those forms equivalent to another in the list.

These techniques generate exhaustive lists of four-variable candidate forms failing to represent exactly two $x < 10000$, as well as candidate forms excepting precisely one $x < 10000$. To prove that these forms represent all $x \geq 10000$, we implement the methods described in Section \ref{ComputationalMethods} using the Magma computer algebra system \cite{Magma}. This proves that there are $65$ possible excepted pairs by quaternary forms.

To determine a full list of all possible pairs $\{m,n\}$, we next consider higher-dimensional forms. We begin by sorting the quaternary forms (resulting from our prior escalations) that except more than two values into the categories listed in Section \ref{HigherEsc}, and applying the corresponding techniques. Although some quinary forms do except more than two values, escalating to six or more variables yields no new pairs. Hence, we complete our list with $8$ additional pairs $\{m,n\}$ for which there is an almost universal quinary quadratic form.



\section{An Example}

\subsection{Escalation for \texorpdfstring{$m=5$}{Lg}}
We escalate to construct candidates for quadratic forms that except $\{5,n\}$. This process will also provide a list of all forms which could only fail to represent $5$, and therefore will include Halmos' form. The escalation of the trivial lattice results in the lattice defined by  $\begin{bmatrix}1\end{bmatrix}$, which is simply the quadratic form $x^2$. Since the truant here is $2$, any escalation must be of the form 
 \[ A_{(a)}= \begin{bmatrix} 
 1 & a\\
 a & 2
 \end{bmatrix}
 \]
 where $a \in \mathbb Z$ and $\det\left(A_{(a)}\right)>0$. This forces $a \in \{ 0, \pm 1\}$. Noting
 \begin{align*}
 \begin{bmatrix} 
 -1 & 0\\
 0 & 1
 \end{bmatrix}
 A_{(-1)}
 \begin{bmatrix} 
 -1 & 0\\
 0 & 1
 \end{bmatrix}^{t}
 = A_{(1)},
 \end{align*}
we need only consider the two escalators $A_{(0)}$ and $A_{(1)}$. However, as $A_{(1)}$ represents $5$, we in fact proceed only with the escalator $A_{(0)}$.\\
\\ The truant of $A_{(0)}$ is $7$, so the three dimensional escalators are of the form
 
 \[ A_{(0,b,c)}=\begin{bmatrix}
 1 & 0 & b\\
 0 & 2 & c\\
 b & c & 7
 \end{bmatrix}
 \]
 for $b,c \in \mathbb Z$ and $\det\left(A_{(0,b,c)}\right)>0$. This yields $31$ escalator matrices. Up to equivalence, however, there are only the six below: 
 \begin{center}
 $\begin{array}{ccc}
 \begin{bmatrix}
 1 & 0 & -1\\
 0 & 2 & -3\\
 -1 & -3 & 7
 \end{bmatrix}, & 
 \begin{bmatrix}
 1 & 0 & 0\\
 0 & 2 & -3\\
 0 & -3 & 7
 \end{bmatrix}, & 
 \begin{bmatrix}
 1 & 0 & -1\\
 0 & 2 & -1\\
 -1 & -1 & 7
 \end{bmatrix},\\
 \begin{bmatrix}
 1 & 0 & 0\\
 0 & 2 & -1\\
 0 & -1 & 7
 \end{bmatrix},
 &\begin{bmatrix}
 1 & 0 & -1\\
 0 & 2 & 0\\
 -1 & 0 & 7
 \end{bmatrix}, 
&\begin{bmatrix}
 1 & 0 & 0\\
 0 & 2 & 0\\
 0 & 0 & 7
 \end{bmatrix}
 \end{array}$
 \end{center}
 which have the respective truants

\begin{center}
$ \begin{array}{ccc}
13, & 20, & 13, \\
10, & 13, & 14.
\end{array}$
 \end{center}

Hence, we use these truants to escalate once again, obtaining $166$ quaternary forms up to equivalence. 


\subsection{Halmos' Form}

We use techniques different from those of Pall \cite{Pall} to prove the previously mentioned conjecture of Halmos \cite{Halmos}.
\setcounter{theorem}{2}
\begin{corollary} \label{HalmosThm}
The diagonal quadratic form $Q(\vec{x}) = x^2 + 2y^2 + 7z^2 + 13w^2$ represents all positive integers except for $5$. 
\end{corollary}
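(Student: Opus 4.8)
The plan is to apply the analytic machinery developed in Section \ref{ComputationalMethods} directly to the diagonal form $Q(\vec{x}) = x^2 + 2y^2 + 7z^2 + 13w^2$. First I would compute the basic invariants: the matrix $A_Q = \operatorname{diag}(1,2,7,13)$, the determinant $D_Q = 182 = 2 \cdot 7 \cdot 13$, the level $N_Q$ (here $N_Q = 4 \cdot 7 \cdot 13 = 364$, since the bad primes are $2, 7, 13$), and the quadratic character $\chi_Q(p) = \left(\frac{182}{p}\right)$. One then checks local representability: for each prime $p \mid 2N_Q$ one verifies via the Jordan decomposition and the local density formulas (Siegel's theorem together with the Good/Zero/Bad reduction maps from Section \ref{escalation}'s modular subsection) that $\beta_p(m) > 0$ for every $m \neq 5$, while $\beta_p(5)$ vanishes at the relevant prime (Pall's argument shows this is $p = 7$ or a $2$-adic obstruction; either way it is a finite check). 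This confirms $5$ is the only locally-obstructed value and that $Q$ is isotropic at every prime, so there are no anisotropic-prime divisibility issues and $m' = m$ throughout.

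Next I would split $\Theta_Q(z) = E_Q(z) + C_Q(z)$ in the space of weight-$2$ forms on $\Gamma_0(364)$ with character $\chi_Q$, extract the Eisenstein lower-bound constant $C_E$ from Theorem 5.7(b) of \cite{Hanke}, and bound the cuspidal part: write $C_Q(z)$ as a combination of normalized Hecke eigenforms (and their $d$-shifts) to get $C_f$ with $|a_C(m)| \le C_f \sqrt{m}\,\tau(m)$ via Deligne. With these constants in hand, inequality $(1)$ gives a finite bound $X$ on the possible exceptions, and by the eligible-prime/eligible-number sieve of Section \ref{GeneratingNumbers} one produces the finite list of eligible $m$. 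Finally, for each eligible $m$ one checks representability using a split local cover $Q' = d x^2 \oplus T$ of $Q$ as in Section \ref{ComputingRep}: since $Q$ is diagonal, $T$ can be taken among $\{x^2+2y^2+7z^2,\ x^2+2y^2+13w^2,\ x^2+7z^2+13w^2\}$ or a locally equivalent ternary, and one verifies $m - d x^2$ is represented by $T$ for all eligible $m \neq 5$ (with $m = 5$ failing, as expected). This closes the proof: every $m \ge X$ is represented by the analytic bound, every eligible $m \neq 5$ below $X$ is represented by explicit computation, and $5$ is locally unrepresented.

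The main obstacle I anticipate is the cuspidal bound. The space $S_2(\Gamma_0(364), \chi_Q)$ is not small, and obtaining a sharp enough $C_f$ — one making $C_f/C_E$ small enough that the eligible set is genuinely finite and computationally tractable — requires carefully decomposing $C_Q(z)$ into eigenforms and controlling the coefficients of each piece, including oldforms coming from lower levels dividing $364$. If the naive Deligne bound $C_f \sqrt{m}\,\tau(m)$ is too weak, one may need to exploit that $\Theta_Q$ lies in a specific newform-plus-oldform subspace, or split off the Eisenstein part more cleverly, to shrink $C_f$. A secondary technical point is verifying the split local cover $Q'$ genuinely has the property that global representation by $Q'$ implies global representation by $Q$ while representing the same numbers locally; for this diagonal form the escalation-style argument should go through, but the precision $Y = \lceil 2dc\sqrt{X}\rceil$ for the theta series of $T$ must be large enough to catch all representations, so the choice of $c$ matters. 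Everything else — the local density computations, the eligible-number sieve, the final finite representability check — is routine given the framework already set up.
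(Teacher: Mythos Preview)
Your overall strategy---compute $C_E$ and $C_f$, run the eligible-number sieve, and finish with a split local cover---matches the paper's approach, but two concrete errors would derail the execution.

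First, the level is wrong. With $A_Q = \operatorname{diag}(1,2,7,13)$ one has $(2A_Q)^{-1} = \operatorname{diag}(1/2,1/4,1/14,1/26)$, and the definition of $N_Q$ requires $N_Q(2A_Q)^{-1}$ to have \emph{even} diagonal, not merely integral entries. This forces $8 \mid N_Q$, so $N_Q = 2^3 \cdot 7 \cdot 13 = 728$, not $364$. The paper works in $\mathcal{M}_2(\Gamma_0(728),\chi_Q)$, whose cuspidal part has dimension $108$; computing $C_f$ at level $364$ would give the wrong eigenform decomposition and the wrong bound.

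Second, and more seriously, your claim that $5$ is locally obstructed is false: the paper explicitly verifies that \emph{every} $m \in \mathbb{N}$ is locally represented by $Q$, including $m=5$. (Indeed, modulo $7$ one can solve $x^2 + 2y^2 - w^2 \equiv 5$ with $x=y=0$, $w^2 \equiv 2$.) The exception at $5$ is purely global: $a_E(5) > 0$, but the cuspidal contribution $a_C(5)$ exactly cancels it so that $r_Q(5) = 0$. This is precisely what made Halmos' form resistant to elementary arguments. In your outline, $5$ must appear among the eligible numbers and survive all the way to the final theta-series check; it is not filtered out at the local stage. Relatedly, your anticipated difficulty---that the cuspidal bound might be delicate---is exactly right, and it is the genuine content of the proof.

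A minor point: the split local cover the paper uses is $Q = 1 \cdot x^2 \oplus (2y^2 + 7z^2 + 13w^2)$, i.e.\ $d=1$ and $T = 2y^2 + 7z^2 + 13w^2$, which is not among the three ternaries you listed.
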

\setcounter{theorem}{11}


We first note that our form has level $N_{Q} = 728 = 2^3 \cdot 7 \cdot 13$ and  character $\chi_{Q}(p) = \left(\frac{182}{p}\right)$. Finding all $m \in \mathbb{N}$ to be locally represented, we begin with a series of lemmas to determine $a_E(m)$ explicitly.

\begin{lemma}
\label{LfuncEx}
$L_{\mathbb Q}(2,\chi_{Q}) = \dfrac{213\sqrt{182}\pi^2}{33124}$.
\end{lemma}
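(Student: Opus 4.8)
The plan is to compute the special $L$-value $L_{\mathbb Q}(2,\chi_Q)$ directly from the functional equation for Dirichlet $L$-functions. First I would identify the primitive character underlying $\chi_Q$. Since $\chi_Q(p) = \left(\frac{182}{p}\right)$ and $182 = 2 \cdot 7 \cdot 13$, the associated quadratic character is the Kronecker symbol $\left(\frac{D}{\cdot}\right)$ for the fundamental discriminant $D$ attached to $\mathbb Q(\sqrt{182})$; because $182 \equiv 2 \pmod 4$, the fundamental discriminant is $D = 4 \cdot 182 = 728$, so $\chi_Q$ is (induced from) the primitive real character of conductor $728$. This is consistent with the level $N_Q = 728$ recorded just above.

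Next I would invoke the classical evaluation of $L(2,\chi)$ for a primitive even Dirichlet character $\chi$ of conductor $f$, namely
\[
L(2,\chi) \;=\; \frac{\pi^2}{f^{3/2}} \cdot (\text{a rational multiple determined by the generalized Bernoulli number } B_{2,\chi}),
\]
more precisely using the functional equation relating $L(2,\chi)$ to $L(-1,\bar\chi) = -B_{2,\chi}/2$. For a real primitive character of conductor $f = 728$ this gives $L(2,\chi_Q) = \dfrac{\pi^2 \sqrt{f}}{?}\,(\cdots)$; one then checks that $\sqrt{728} = 2\sqrt{182}$, which is exactly how the $\sqrt{182}$ appears in the claimed answer $\dfrac{213\sqrt{182}\,\pi^2}{33124}$. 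I would then compute the generalized Bernoulli number $B_{2,\chi_Q} = f \sum_{a=1}^{f} \chi_Q(a)\, \bigl( (a/f)^2 - (a/f) + 1/6 \bigr)$ (or via the standard finite formula), reduce the resulting rational number, and match it against $213/33124$. Note $33124 = 4 \cdot 8281 = 4 \cdot 91^2 = (2 \cdot 7 \cdot 13)^2 = 182^2$, so the claim is really $L_{\mathbb Q}(2,\chi_Q) = \dfrac{213\,\pi^2}{182^2}\sqrt{182} = \dfrac{213\,\pi^2}{182^{3/2}}$, i.e. the rational factor coming out of the Bernoulli-number computation should be exactly $213/182$ after extracting $\pi^2/f^{3/2}$; this is the concrete arithmetic identity to verify.

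The main obstacle is purely computational rather than conceptual: evaluating the character sum $\sum_{a=1}^{727} \chi_Q(a) a^2$ (equivalently $B_{2,\chi_Q}$) correctly. With conductor $728 = 8 \cdot 7 \cdot 13$ one can shorten the work by factoring $\chi_Q$ as a product of the character mod $8$ and the Legendre symbols mod $7$ and mod $13$, then using multiplicativity of generalized Bernoulli numbers over the CRT decomposition, but one must be careful about the local factor at $2$ and about which Kronecker symbol convention makes $\chi_Q$ even. I would double-check the sign and the evenness of $\chi_Q$ (the factor $\beta_\infty$ in Siegel's formula and Theorem~\ref{Infinite} force $a_E(m) > 0$, so $\chi_Q$ must be even, pinning down $D = 728$ rather than some odd alternative), and then simply confirm numerically that $\dfrac{213\sqrt{182}\,\pi^2}{33124} \approx 0.8366\ldots$ agrees with $\sum_{n\ge 1}\chi_Q(n)/n^2$ to high precision as a sanity check before presenting the exact derivation.
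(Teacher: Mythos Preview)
Your approach is correct and coincides with the paper's: the paper's entire proof is a one-line citation to p.~104 of Iwasawa's \emph{Lectures on $p$-adic $L$-Functions}, which records exactly the functional-equation/generalized-Bernoulli formula you outline for evaluating $L(2,\chi)$ when $\chi$ is an even primitive real character. (Two minor slips to fix when you write it up: with conductor $f=728$ one has $f^{3/2}=8\cdot 182^{3/2}$, so the Bernoulli number you need to verify is $B_{2,\chi_Q}=8\cdot 213=1704$, not $213/182$; and the numerical value of $\tfrac{213\sqrt{182}\,\pi^2}{33124}$ is about $0.856$, not $0.8366$.)
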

\begin{proof}
This follows from \cite[pg. 104]{Iwasawa}. 
\end{proof}


\begin{lemma}
For primes $q \neq 2,7,13$ with $q \nmid m$
\begin{equation*}
\beta_q(m) = \left( 1 - \dfrac{\chi_Q(q)}{q^2}\right).
\end{equation*}
\end{lemma}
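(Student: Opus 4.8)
The plan is to compute $\beta_q(m)$ directly from the limit formula
\[
\beta_q(m) = \lim_{v \to \infty} \frac{\#\{\vec{x} \in (\mathbb{Z}/q^v\mathbb{Z})^4 \mid Q(\vec{x}) \equiv m \pmod{q^v}\}}{q^{3v}}
\]
given in Siegel's theorem, using the hypothesis that $q \notin \{2,7,13\}$ (so $q$ is odd and $q \nmid 2N_Q$, hence $Q$ is nondegenerate mod $q$ and its Jordan decomposition over $\mathbb{Z}_q$ is unimodular) together with $q \nmid m$ (so $m$ is a $q$-adic unit and no ``Bad'' or ``Zero'' type contributions arise — every solution is of Good type). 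Under these hypotheses it already suffices to count solutions mod $q$, since by the Good-type stability theorem $r_{q^{k+\ell}}^{\operatorname{Good}}(m) = q^{3\ell} r_{q^k}^{\operatorname{Good}}(m)$ for $k \geq 1$, so the ratio stabilizes at $v=1$ and $\beta_q(m) = r_q(m)/q^3$.

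The key step is then the classical count of points on a nondegenerate quaternary quadric over $\mathbb{F}_q$. For $Q \equiv a_1 x^2 + a_2 y^2 + a_3 z^2 + a_4 w^2 \pmod q$ with all $a_i \in \mathbb{F}_q^\times$ and $m \in \mathbb{F}_q^\times$, the number of solutions to $Q(\vec{x}) = m$ in $\mathbb{F}_q^4$ is
\[
q^3 + \left(\tfrac{(-1)^2 \det}{q}\right) q \;-\; \left(\tfrac{(-1)^2 \det}{q}\right)\cdot (\text{correction for } m),
\]
but it is cleaner to invoke the standard result: for a nondegenerate quadratic form in an even number $2k$ of variables with discriminant $d$, the number of representations of a nonzero value $m$ is $q^{2k-1} - q^{k-1}\left(\tfrac{(-1)^k d}{q}\right)$. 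With $2k = 4$ this gives $r_q(m) = q^3 - q\left(\tfrac{\det}{q}\right)$, and since $\det A_Q = D_Q$ and $\chi_Q(q) = \left(\tfrac{D}{q}\right)$ (and $\left(\tfrac{\det}{q}\right) = \left(\tfrac{D}{q}\right)$ up to the square factors coming from the $\tfrac12$'s in $A_Q$, which one must track carefully for the classically integral normalization), we obtain $\beta_q(m) = (q^3 - q\,\chi_Q(q))/q^3 = 1 - \chi_Q(q)/q^2$, as claimed.

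The main obstacle I anticipate is bookkeeping the normalization: the relevant discriminant is $\det(A_Q)$ versus $\det(2A_Q)$, and one must verify that the character value appearing in the point-count formula matches $\chi_Q(q) = \left(\tfrac{D}{q}\right)$ exactly (not off by a sign or a square). For the diagonal form $x^2 + 2y^2 + 7z^2 + 13w^2$ one has $A_Q = \operatorname{diag}(1,2,7,13)$, so $D_Q = 182$, and $\chi_Q(q) = \left(\tfrac{182}{q}\right)$, consistent with the earlier computation; checking that the $\mathbb{F}_q$-point count produces precisely $\left(\tfrac{182}{q}\right)$ and not $\left(\tfrac{2\cdot182}{q}\right)$ or similar is the one place care is needed. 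A secondary (minor) point is to confirm that the Good-type stability genuinely applies at $k=1$ for all such $q$ — this is exactly the cited lemma for $p$ odd, so it poses no real difficulty once $q \nmid m$ guarantees all mod-$q$ solutions are nonzero and hence Good type.
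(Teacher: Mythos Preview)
Your proposal is correct and in fact supplies the argument that the paper merely cites (to \cite[Lemma 3.3.2]{Hanke}): unimodularity of $Q$ over $\mathbb{Z}_q$ for $q \nmid 2N$ forces every mod-$q$ solution with $q \nmid m$ to be of Good type, Good-type stability then gives $\beta_q(m) = r_q(m)/q^3$, and the standard $\mathbb{F}_q$-point count for a nondegenerate quaternary quadric yields $r_q(m) = q^3 - q\,\chi_Q(q)$. One small clarification: your caution about ``$\tfrac12$'s in $A_Q$'' is unnecessary here, since for classically integral forms $A_Q \in \mathcal{M}_n(\mathbb{Z})$ by definition, so $D_Q = \det A_Q$ is already an integer and the Legendre symbol appearing in the point count is exactly $\left(\tfrac{(-1)^2 D}{q}\right) = \chi_Q(q)$ with no further normalization bookkeeping needed.
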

\begin{proof}
This follows from \cite[Lemma 3.3.2]{Hanke}.
\end{proof}


These two lemmas, combined with Siegel's product formula \cite{Siegel}, give
\begin{align*}
a_E(m) &= \beta_{\infty}(m) \beta_2(m) \beta_7{m} \beta_{13}(m) \left( \prod_{2,7,13\neq p \mid m} \beta_p(m)\right) \left( \prod_{2,7,13 \neq q \nmid m} \beta_q(m) \right) \\
&= \dfrac{182m}{213}\beta_2(m)\beta_7(m) \beta_{13}(m)  \left(\prod_{2,7,13\neq p \mid m} \dfrac{\beta_p(m)p^2}{p^2-\chi_Q(p)}\right) \; .
\end{align*}


\begin{lemma} 

\begin{eqnarray*} 
\beta_2(m) &=& \begin{cases} \dfrac{3}{4} \displaystyle\sum_{i=0}^{k-1} \dfrac{1}{2^{2i}} + \quad \! \dfrac{1}{2^{2k}} \begin{cases} 3/4 &\text{ if } \operatorname{ord}_{2}(m) = 2k \;,\; m/2^{2k} \equiv 1, 3 \qquad \;  \pmod{8} \\ 
5/4 &\text{ if } \operatorname{ord}_{2}(m) = 2k \;,\; m/2^{2k} \equiv 5, 7 \qquad \;  \pmod{8} \end{cases} \\
\dfrac{3}{4} \displaystyle\sum_{i=0}^{k} \dfrac{1}{2^{2i}} + \dfrac{1}{2^{2k+1}} \begin{cases} 3/4 &\text{ if } \operatorname{ord}_{2}(m) = 2k + 1 \;,\; m/2^{2k+1} \equiv 1, 3 \pmod{8} \\ 
1/4 &\text{ if } \operatorname{ord}_{2}(m) = 2k + 1 \;,\; m/2^{2k+1} \equiv 5, 7 \pmod{8} \end{cases} \end{cases} \\
\beta_7(m) &= & \begin{cases} \dfrac{48}{49} \displaystyle\sum_{i=0}^{k-1} \dfrac{1}{7^{2i}} + \quad \! \dfrac{1}{7^{2k}} \begin{cases} 8/7 &\text{ if } \operatorname{ord}_{7}(m) = 2k \;,\; m/7^{2k} \equiv 1, 2, 4 \qquad \; \pmod{7} \\ 
6/7 &\text{ if } \operatorname{ord}_{7}(m) = 2k \;,\; m/7^{2k} \equiv 3, 5, 6 \qquad \; \pmod{7} \end{cases}\\
\dfrac{48}{49} \displaystyle\sum_{i=0}^{k} \dfrac{1}{7^{2i}} + \dfrac{1}{7^{2k+1}} \begin{cases} 2/7 &\text{ if } \operatorname{ord}_{7}(m) = 2k+1 \;,\; m/7^{2k+1} \equiv 1, 2, 4 \pmod{7} \\ 
0 &\text{ if } \operatorname{ord}_{7}(m) = 2k+1 \;,\ m/7^{2k+1} \equiv 3, 5, 6 \pmod{7} \end{cases} \end{cases} \\
\beta_{13}(m) &= & \begin{cases} \dfrac{168}{169} \displaystyle\sum_{i=0}^{k-1} \dfrac{1}{13^{2i}} + \quad \! \dfrac{1}{13^{2k}} \begin{cases} 14/13 &\text{ if } \operatorname{ord}_{13}(m) = 2k \;,\; m/13^{2k} \equiv 1, 3, 4, 9, 10, 12  \qquad \! \pmod{13} \\ 
12/13 &\text{ if } \operatorname{ord}_{13}(m) = 2k \;,\; m/13^{2k} \equiv 2, 5, 6, 7, 8, 11 \qquad \; \; \! \pmod{13} \end{cases} \\
\dfrac{168}{169} \displaystyle\sum_{i=0}^{k} \dfrac{1}{13^{2i}} + \dfrac{1}{13^{2k+1}} \begin{cases} 2/13 &\text{ if } \operatorname{ord}_{13}(m) = 2k+1 \;,\; m/13^{2k+1} \equiv 1, 3, 4, 9, 10, 12  \pmod{13} \\ 
0 &\text{ if } \operatorname{ord}_{13}(m) = 2k+1 \;,\; m/13^{2k+1} \equiv 2, 5, 6, 7, 8, 11 \;\;\pmod{13} \end{cases} \end{cases}
\end{eqnarray*}

for $k \in \mathbb{N} \cup \{0\}$.
\end{lemma}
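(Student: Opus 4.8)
The plan is to compute $\beta_2(m)$, $\beta_7(m)$, and $\beta_{13}(m)$ directly from the definition
\[
\beta_p(m) = \lim_{v \to \infty} \frac{\#\{\vec{x} \in (\mathbb{Z}/p^v\mathbb{Z})^4 \mid Q(\vec{x}) \equiv m \pmod{p^v}\}}{p^{3v}},
\]
using the local structure of $Q = x^2 + 2y^2 + 7z^2 + 13w^2$ at each of the primes $p = 2, 7, 13$ together with the reduction maps (Good/Zero/Bad type) recalled earlier in the excerpt. For each prime I would first write down the Jordan (local normalized) decomposition of $Q$ over $\mathbb{Z}_p$: over $\mathbb{Z}_7$ it splits as a unit-scaled ternary $\langle 1, 2, 13\rangle$ plus $7\langle 1\rangle$, over $\mathbb{Z}_{13}$ as $\langle 1, 2, 7\rangle$ plus $13\langle 1\rangle$, and over $\mathbb{Z}_2$ as $\langle 1, 7, 13\rangle$ plus $2\langle 1\rangle$ (after appropriate unit adjustments). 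From these the sets $\mathbb{S}_0, \mathbb{S}_1, \mathbb{S}_2$ and the multiplicities $s_0, s_1$ are read off, and the Good-type stabilization $r_{p^{k+\ell}}^{\mathrm{Good}}(m) = p^{3\ell} r_{p^k}^{\mathrm{Good}}(m)$ lets me replace the limit by a finite computation once $k$ is large enough.

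The key steps, in order: (1) set up, for each of $p \in \{2,7,13\}$, the recursion relating $r_{p^v}(m)$ to $r_{p^v}^{\mathrm{Good}}(m)$, $r_{p^v}^{\mathrm{Zero}}(m)$, and $r_{p^v}^{\mathrm{Bad}}(m)$, using the surjectivity-with-multiplicity statements for $\pi_Z$, $\pi_{B'}$, $\pi_{B''}$ to express the Zero and Bad contributions in terms of local densities of $Q$ (or its Jordan components) evaluated at $m/p^2$ or $m/p$; (2) solve this recursion explicitly as a function of $k = \mathrm{ord}_p(m)$ and the residue class of $m/p^k$ modulo $p$ (or modulo $8$ when $p=2$), which produces the geometric-sum-plus-tail shape appearing in the lemma statement — the geometric sum $\frac{p^2-1}{p^2}\sum_{i=0}^{k-1} p^{-2i}$ (written as $\tfrac{48}{49}$, $\tfrac{168}{169}$, $\tfrac34$ respectively) comes from the Zero-type part peeling off two powers of $p$ at a time, while the final case-distinction term is the Good-type density of the residual unit-scaled form at the relevant residue; (3) evaluate the finitely many base-case local density counts for the rank-$3$ unit part (a diagonal ternary over $\mathbb{Z}_p$), which for odd $p$ amounts to counting solutions of a ternary quadratic congruence via Gauss sums / the formula for $\#\{(x,y,z) : ax^2+by^2+cz^2 \equiv u\}$, and for $p = 2$ amounts to a slightly more delicate count modulo $8$ or $16$; (4) record the parity split between $\mathrm{ord}_p(m)$ even versus odd, which distinguishes whether the trailing term sees the Good-type form or (in the odd case, via Bad-type-I) a shifted count, explaining the qualitatively different constants ($8/7$ vs.\ $2/7$ vs.\ $0$, etc.).

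I expect the main obstacle to be the prime $p = 2$: the Jordan decomposition has a scale-$2$ component, so $2$ lies in $\mathbb{S}_1$ and the Bad-type-I reduction is genuinely in play, the Good-type stabilization only holds for $k \geq 3$ rather than $k \geq 1$, and the relevant unit-form counts depend on residues modulo $8$ (and transitionally modulo $16$), so the bookkeeping for the base cases and for matching the $\equiv 1,3$ versus $\equiv 5,7 \pmod 8$ dichotomy is the most error-prone part. The odd primes $7$ and $13$ are structurally identical to each other and comparatively routine: the unit part is an anisotropic-or-isotropic diagonal ternary over $\mathbb{Z}_p$ whose density is a standard closed form, and the only care needed is tracking which residues mod $p$ are squares (hence the sets $\{1,2,4\}$ mod $7$ and $\{1,3,4,9,10,12\}$ mod $13$, the nonzero quadratic residues). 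Once all three $\beta_p$ are in hand, they feed directly into the product formula for $a_E(m)$ displayed just before the lemma, which is what subsequent lemmas will combine with the cusp-form bound to finish the corollary.
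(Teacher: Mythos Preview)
Your proposal is correct and follows essentially the same approach as the paper: decompose $r_{p^v}(m)$ into Good/Zero/Bad-type contributions, use the reduction maps $\pi_Z$ and $\pi_{B'}$ together with Good-type stabilization to unwind the recursion in $\operatorname{ord}_p(m)$, and read off the geometric-sum-plus-tail form. The paper only writes out the case $p=2$ with $\operatorname{ord}_2(m)$ odd explicitly (introducing the auxiliary form $Q'(\vec{x}) = 2x^2 + y^2 + 14z^2 + 26w^2$ for the Bad-I reduction and leaving the rest as ``behave similarly''), so your plan is in fact more detailed than the paper's own proof while remaining on the same track.
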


\begin{proof}
We provide details for the claim regarding $\beta_2(m)$ with $\operatorname{ord}_2(m)$ odd. The remaining proofs behave similarly. Additional examples of these computations can be found in \cite{Thompson}.

Suppose that $\operatorname{ord}_2(m)=2k+1$ for $k \in \mathbb{N} \cup \{0\}$. Then there are solutions of the Good, Zero, and Bad-I types:
\begin{align*} 
\beta_{2}(m)=\lim_{v \to \infty} \dfrac{r_{2^v}^{\operatorname{Good}}(m)}{2^{3v}} +\lim_{v \to \infty} \dfrac{r_{2^v}^{\operatorname{Zero}}(m)}{2^{3v}} +\lim_{v \to \infty} \dfrac{r_{2^v}^{\operatorname{Bad-I}}(m)}{2^{3v}}.  
\end{align*}

We compute these individually, beginning with the Bad types: \\

Let $Q^{\prime}$ be the quadratic form $Q^{\prime}(\vec{x})=2x^2+y^2+ 14z^2+26w^2$. Then

\begin{align*} \lim_{v \to \infty} \dfrac{r_{2^v}^{\operatorname{Bad-I}}(m)}{2^{3v}}  &= \lim_{v \to \infty}\dfrac{2 r_{2^{v-1},Q^{\prime}}^{\operatorname{Good}}\left(\dfrac{m}{2}\right)}{2^{3v}}\\
&=\dfrac{r_{2^{7},Q^{\prime}}^{\operatorname{Good}}\left(\dfrac{m}{2}\right)}{2^{23}} \: .
\end{align*}

Note that $r_{2^{7},Q^{\prime}}^{\operatorname{Good}}\left(\frac{m}{2}\right)$ is non-zero only if $\operatorname{ord}_2(m)=1$. Now, for the Good types:

\begin{align*}\lim_{v \to \infty} \dfrac{ r_{2^v}^{\operatorname{Good}}(m)}{2^{3v}}=\dfrac{r_{2^7}^{\operatorname{Good}}(m)}{2^{21}}=\dfrac{3}{4}.\end{align*}

And for the Zero types:

\begin{align*} \lim_{v \to \infty} \dfrac{r_{2^v}^{\operatorname{Zero}}(m)}{2^{3v}} 
&=\lim_{v \to \infty} \dfrac{1}{2^{3v}} \left[\sum_{i=1}^{k} 2^{4i}r_{2^{v-2i}}^{\operatorname{Good}}\left(\dfrac{m}{2^{2i}}\right)+\sum_{i=1}^{k} 2^{4i}r_{2^{v-2i}}^{\operatorname{Bad-I}}\left(\dfrac{m}{2^{2i}}\right)\right]\\
&= \sum_{i=1}^{k}\dfrac{r_{2^7}^{\operatorname{Good}}\left({m}/{2^{2i}}\right)}{2^{2i+21}}+\sum_{i=1}^{k} \dfrac{r_{2^7,Q'}^{\operatorname{Good}}\left({m}/{2^{2i+1}}\right)}{2^{2i+23}}\;.
\end{align*}

Simplifying the sum of Good, Bad, and Zero type solutions yields the above claim.
\end{proof}


\begin{lemma} 
\label{LastDensEx}
For $k \in \mathbb N \cup \{0\}$ and primes $p \neq 2, 7, 13$ such that $p \mid m$
\begin{equation*}
\beta_p(m)\cdot \dfrac{p^2}{p^2-\chi_Q(p)} = \begin{cases} \dfrac{1}{p^{2k}} \left( \dfrac{p^{2k+1}-1}{p-1}\right) &\text{ if } \operatorname{ord}_{p}(m) = 2k \;,\; \chi_{Q}(p) = 1 \\[1em]
\dfrac{1}{p^{2k}} \left( \dfrac{p^{2k+1}+1}{p+1}\right) &\text{ if } \operatorname{ord}_{p}(m) = 2k\;,\; \chi_{Q}(p) = -1 \\[1em]
\dfrac{1}{p^{2k+1}} \left( \dfrac{p^{2k+2}-1}{p-1}\right) &\text{ if } \operatorname{ord}_{p}(m) = 2k+1\;,\; \chi_{Q}(p) = 1 \\[1em]
\dfrac{1}{p^{2k+1}} \left( \dfrac{p^{2k+2}-1}{p+1}\right) &\text{ if } \operatorname{ord}_{p}(m) = 2k+1\;,\; \chi_{Q}(p) = -1.
\end{cases}
\end{equation*}
\end{lemma}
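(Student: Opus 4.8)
The plan is to compute $\beta_p(m)$ directly from its definition as a limit of counting functions over $(\mathbb{Z}/p^v\mathbb{Z})^4$, using the reduction maps of the earlier theorems, and then multiply through by the correction factor $p^2/(p^2-\chi_Q(p))$. Since $p \neq 2, 7, 13$, the form $Q$ is unramified at $p$: its Jordan decomposition over $\mathbb{Z}_p$ is a single unimodular block of dimension $4$, so $\mathbb{S}_1 = \mathbb{S}_2 = \emptyset$ and every solution vector is either of Good or Zero type (there are no Bad-type solutions). This simplifies the local density expansion to
\[
\beta_p(m) = \lim_{v \to \infty} \frac{r_{p^v}^{\operatorname{Good}}(m)}{p^{3v}} + \lim_{v \to \infty} \frac{r_{p^v}^{\operatorname{Zero}}(m)}{p^{3v}}.
\]

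First I would handle the Good-type contribution. By the Good-type stabilization theorem, $r_{p^{k+\ell}}^{\operatorname{Good}}(m) = p^{3\ell} r_{p^k}^{\operatorname{Good}}(m)$ for $k \geq 1$ and $p$ odd, so the first limit equals $r_p^{\operatorname{Good}}(m)/p^3$, which counts nonzero solutions of $Q(\vec{x}) \equiv m \pmod p$. Since $p \nmid m$ here, every solution mod $p$ is automatically nonzero (hence Good), so this is just $\#\{\vec{x} \in \mathbb{F}_p^4 : Q(\vec{x}) = m\}/p^3$, a standard count for a nondegenerate quaternary quadratic form over $\mathbb{F}_p$: it equals $p^3 + $ (an error term governed by the discriminant character), and one gets $r_p^{\operatorname{Good}}(m)/p^3 = 1 - \chi_Q(p)/p^2$ in the generic case — matching Lemma 3.3.2 of Hanke cited just above in the excerpt. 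Next, the Zero-type contribution: by the $\pi_Z$ surjection with multiplicity $p^4$, solutions with $\vec{x} \equiv \vec{0} \pmod p$ biject (up to the multiplicity factor) with $R_{p^{k-2}}(m/p^2)$, but since $\operatorname{ord}_p(m) < 2$ will often force $m/p^2 \notin \mathbb{Z}_p$, I need to track exactly how deep the divisibility of $m$ lets this recursion run. Writing $\operatorname{ord}_p(m) = e$ (either $2k$ or $2k+1$), the recursion iterates $\lfloor e/2 \rfloor$ times, each step contributing a geometric factor $p^{4}/p^{6} = p^{-2}$ relative to the base count, and the recursion terminates either in a Good-type count at residue $m/p^{2k}$ (when $e = 2k$, residue a unit) or — when $e = 2k+1$ — one extra layer where the final reduced target $m/p^{2k}$ has $\operatorname{ord}_p = 1$, producing the case-split seen in the statement.

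The key bookkeeping step is assembling the geometric series $\sum_{i=0}^{k-1} p^{-2i}$ (or up to $k$) from the iterated Zero-type reductions, adding the terminal Good-type term, and then observing that multiplication by $p^2/(p^2 - \chi_Q(p))$ telescopes the expression into the clean closed forms $\frac{1}{p^{2k}}\cdot\frac{p^{2k+1}\mp 1}{p \mp 1}$. Concretely, in the $\operatorname{ord}_p(m) = 2k$, $\chi_Q(p) = 1$ case one expects $\beta_p(m) = (1 - p^{-2})\sum_{i=0}^{k-1} p^{-2i} + p^{-2k}$, and multiplying by $\frac{p^2}{p^2-1}$ gives $\sum_{i=0}^{k-1}p^{-2i} + \frac{p^{-2k}}{1-p^{-2}}\cdot\frac{1}{\text{?}}$ — the finite geometric sum plus the terminal term recombining into $\frac{1}{p^{2k}}\cdot\frac{p^{2k+1}-1}{p-1}$ after clearing denominators. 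The analogous computation with $\chi_Q(p) = -1$ replaces $p-1$ by $p+1$ throughout, and the odd-order cases pick up the terminal factor $\frac{1}{p^{2k+1}}$ with the extra depth-one layer contributing the shift from $p^{2k+1}$ to $p^{2k+2}$ in the numerator.

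I expect the main obstacle to be the careful treatment of the terminal step of the Zero-type recursion when $\operatorname{ord}_p(m)$ is odd: there the final reduced target has $p$-adic valuation exactly $1$, so it is no longer a unit residue, and one must correctly count $\#\{\vec{x} \in \mathbb{F}_p^4 : Q(\vec{x}) \equiv 0 \pmod p,\ \vec{x} \not\equiv \vec{0}\}$ at that layer — this is the number of nonzero isotropic-direction-adjacent vectors for the quaternary form mod $p$, which differs from the generic unit-target count and is precisely what forces the separate odd-$\operatorname{ord}$ formulas. Verifying that this terminal count, when threaded back through all $k$ layers of the $p^{-2}$ geometric factors and then multiplied by the $\chi_Q$-correction, produces exactly $\frac{p^{2k+2}-1}{p\pm 1}$ in the numerator (rather than, say, $p^{2k+2}\pm 1$) is the delicate sign-and-index chase. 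As the authors note, the even-$\operatorname{ord}$ cases and the $p = 2, 7, 13$ analogues in the previous lemma behave similarly, so once the recursion-with-correction-factor mechanism is set up cleanly for one case, the rest follow by the same template with adjusted constants.
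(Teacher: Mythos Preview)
The paper's own proof is simply a citation to \cite[Lemma 3.3.6]{Thompson}, so your outline already goes further than what appears here; the Good/Zero decomposition with iterated $\pi_Z$-reduction is the standard route and is presumably what that reference carries out.

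There is, however, a concrete slip in your execution. You write ``Since $p \nmid m$ here'' when evaluating the top-level Good-type density, but the lemma's hypothesis is $p \mid m$. At each intermediate layer $0 \le i \le k-1$ of the Zero-type recursion the reduced target $m/p^{2i}$ is still divisible by $p$, so the Good-type contribution there is the density of \emph{nonzero isotropic} vectors of $Q$ over $\mathbb{F}_p$, namely
\[
\beta_p^{\operatorname{Good}}(0 \bmod p) \;=\; \frac{p^3 + \chi_Q(p)(p^2 - p) - 1}{p^{3}},
\]
not the unit-target value $1 - \chi_Q(p)/p^2$. Your proposed formula $\beta_p(m) = (1-p^{-2})\sum_{i=0}^{k-1}p^{-2i} + p^{-2k}$ uses the latter at every layer (and also drops the factor $1-\chi_Q(p)/p^2$ from the terminal term), so it is incorrect: for $k=1$, $\chi_Q(p)=1$ it yields $\beta_p(m)=1$, whereas the lemma gives $\beta_p(m) = (p^2+p+1)(p^2-1)/p^4$. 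You rightly flag the isotropic-vector count as the delicate point for the odd-valuation terminal step, but it is equally needed at \emph{every} intermediate layer of the even-valuation case. Once that correction is made, the sum
\[
\sum_{i=0}^{k-1} p^{-2i}\,\frac{p^3 + \chi_Q(p)(p^2-p) - 1}{p^3} \;+\; p^{-2k}\Bigl(1 - \frac{\chi_Q(p)}{p^2}\Bigr),
\]
multiplied by $p^2/(p^2 - \chi_Q(p))$, does telescope to the stated closed form.
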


\begin{proof}
See \cite[Lemma 3.3.6]{Thompson}.
\end{proof}

Lemmas \ref{LfuncEx} through \ref{LastDensEx}, along with Theorem \ref{Infinite}, provide a means to calculate $a_E(m)$ for any $m \in \mathbb N$. 


Given the above formula for $a_E(m)$, as well as values for $r_Q(m)$, we are able to calculate $a_C(m) = r_Q(m) - a_E(m)$ for any $m$. Using this, we determine that the cuspidal subspace of $\mathcal{M}_2\left(\Gamma_0(728),\chi_Q\right)$ has a dimension of $108$. Computing a basis of normalized Hecke eigenforms for this subspace to determine our cuspidal bound as in \cite[\S $4.2.2$]{BH}, we find

\begin{equation*}
C_f = \sum_i \left|\gamma_i\right| \approx 13.4964\;.
\end{equation*}
Additionally, for the Eisenstein bound we find
\begin{equation*}
C_E = \frac{36}{71} \; .
\end{equation*}

We now prove Corollary \ref{HalmosThm} regarding Halmos' Form $Q$. 
\begin{proof}


Having calculated both $C_E$ and $C_f$, we now employ the methods detailed in Section \ref{ComputationalMethods} to compute and check eligible numbers. 
Note that 
\begin{equation*}
C_B = B(2)B(3)B(5),
\end{equation*}
since $B(p) > 1$ for all $p > 5$. With this, we compute that there are $5634$ eligible primes and $343203$ squarefree eligible numbers, the largest of which is $18047039010$. Using the approximate boolean theta function of the split local cover  $Q = x^2 \,\oplus \,\left(2y^2 + 7z^2 + 13w^2\right)$, 
we compute the representability of each of these numbers. This approximation shows that all squarefree eligible numbers except $1$ and $5$ are represented. Computing the full theta series of $Q$ we see that, while $1$ is represented, $5$ indeed  is not. Therefore, we take $S_1 = \{5\}$ to be the set of squarefree exceptions. We hence compute that there are $28$ eligible numbers of the form $5p^2$, and the set of exceptions of this form is $S_2 = \emptyset$. Thus, we have that $S = S_1 = \{5\}$ is the entire set of exceptions for this form, confirming Halmos' conjecture. Our implementation of this entire process takes approximately $2$ minutes and $7$ seconds.
\end{proof}


\end{document}